\documentclass{amsart}
\usepackage{graphics}
\usepackage{graphicx}
\usepackage{epsfig}
\usepackage{amscd}
\usepackage{amssymb}
\usepackage{amsthm}
\usepackage{amsmath}
\numberwithin{equation}{section}
\usepackage{mathrsfs}
\usepackage{color}
\usepackage{hyperref}
\usepackage{bm}
\usepackage{float}
\usepackage{geometry}
\usepackage{tikz}
\usepackage{multirow}
\usepackage{verbatim}
\usepackage{geometry}
\usepackage{placeins}

\usetikzlibrary{matrix,arrows,arrows.meta,calc}
\usetikzlibrary{decorations.pathreplacing,decorations.markings}
\usetikzlibrary{shapes,positioning}
\tikzstyle arrowstyle=[scale=1]
\tikzstyle directed=[postaction={decorate,decoration={markings,
    mark=at position .5 with {\arrow[arrowstyle]{stealth}}}}]
\tikzstyle reverse directed=[postaction={decorate,decoration={markings,
    mark=at position .65 with {\arrowreversed[arrowstyle]{stealth};}}}]

\newtheorem{thm}{Theorem}[section]
\newtheorem{cor}[thm]{Corollary}
\newtheorem{prop}[thm]{Proposition}

\theoremstyle{plain}
\newtheorem{lem}[thm]{Lemma}

\theoremstyle{remark}

\theoremstyle{definition}
\newtheorem{defn}[thm]{Definition}
\newtheorem{ex}[thm]{Example}

\numberwithin{equation}{section}

\newcommand{\C}{\mathbb{C}}

\newcommand{\Z}{\mathbb{Z}}

\newcommand{\A}{\mathcal{A}}

\newcommand{\FF}{\mathscr{F}}
\newcommand{\E}{\mathscr{E}}

\newcommand{\OS}{\mathrm{OS}}

\newcommand{\sOS}{\mathscr{OS}}

\DeclareMathOperator{\rank}{rank}

\DeclareMathOperator{\colim}{colim}
\DeclareMathOperator{\coker}{coker}

\DeclareMathOperator{\Hilb}{Hilb}
\DeclareMathOperator{\NBC}{NBC}
\newcommand{\Mod}{\mathbf{Mod}}

\begin{document}

\title{Orlik--Solomon sheaf homology of geometric lattices}

\author{Ye Liu}
\address{Department of Pure Mathematics, Xi'an Jiaotong-Liverpool University, Suzhou, Jiangsu 215123, P. R. China}
\email{yeliumath@gmail.com}

\subjclass[2020]{Primary 55N35; Secondary 05B35}

\date{}


\keywords{Geometric lattice, Orlik--Solomon algebra, sheaf homology}

\begin{abstract}
We associate the Orlik--Solomon sheaf with a finite geometric lattice and compute its sheaf homology. We show that this homology concentrates in top degree, admitting a convolution-type decomposition into a principal ideal OS piece tensoring with a principal filter complement poset homology. Applications to uniform matroids provide interesting representations of symmetric groups.
\end{abstract}

\maketitle

\tableofcontents

\section{Introduction}

The Orlik--Solomon algebra is one of the fundamental algebraic invariants of a hyperplane arrangement.  If $\A$ is a complex hyperplane arrangement, Orlik and Solomon proved that the cohomology ring of the complement is determined by the intersection lattice $L(\A)$ and is naturally presented by generators indexed by the hyperplanes and relations indexed by dependent sets \cite{Orlik1980}.  This construction extends without reference to realizability: every geometric lattice, equivalently every simple matroid, has an associated Orlik--Solomon algebra.  Its graded pieces are free abelian groups with NBC bases, and the Hilbert series is the Poincar\'e polynomial of the lattice. Thus the Orlik--Solomon algebra connects two central themes of arrangement theory: the topology of complements and the combinatorics of geometric lattices.

Sheaf homology of posets provides another way to attach homological invariants to arrangements and matroids.  Everitt and Turner developed a theory of sheaf homology for graded atomic lattices and proved deletion--restriction long exact sequences for such homology theories.  They used this framework to compute the sheaf homology of the intersection lattice of a hyperplane arrangement with coefficients in the natural sheaf, recovering and extending a theorem of Lusztig \cite{Everitt2022}.  In subsequent work, they computed the corresponding homology with coefficients in exterior powers of the natural sheaf, using Boolean covers and cellular methods \cite{Everitt2022a}.  These results suggest that natural algebraic coefficient systems on geometric lattices often have computable and highly structured sheaf homology.

The purpose of this paper is to study the analogous coefficient system obtained from local Orlik--Solomon algebras.  Let $L$ be a geometric lattice of rank $\ell$ and let $x\in L$ be a flat.  The principal ideal $L_{\leq x}$ is again a geometric lattice, so it has its own Orlik--Solomon algebra $\OS^\bullet(L_{\leq x})$.  If $x\leq y$, then $L_{\leq x}\subseteq L_{\leq y}$.  Besides the natural inclusion of local Orlik--Solomon algebras, there is also a natural surjection
\[
\pi_{y,x}:\OS^\bullet(L_{\leq y})\longrightarrow \OS^\bullet(L_{\leq x})
\]
obtained by killing all atom generators not lying below $x$.  These maps define a sheaf of graded algebras $\sOS^\bullet$ on $L$.  We call it the Orlik--Solomon sheaf of $L$.

Our main result computes the sheaf homology of $P=L\setminus\{\hat0\}$ with coefficients in each graded piece $\sOS^p$. We prove that the sheaf homology concentrates in top degree $H_{\ell-1}(P;\sOS^p)$, which has a convolution-type decomposition with a factor the top OS group of a principal ideal and a factor the poset homology of a principal filter complement, which is a geometric semilattice.
We also describe bases for these homology groups.  The OS factor has its usual NBC basis.  For the semilattice factor, Ziegler's $\beta$-NBC bases index the basic cycles in the top homology.

Finally, we examine examples with symmetric group actions.  For Boolean lattices, the only positive-degree contribution occurs in top OS degree and the two sign representations cancel, giving the trivial representation.  Rank-two lattices give a complete decomposition into Specht modules.  For the uniform matroid $U_{r,n}$, the $p<r$ pieces are described by induction from $\mathfrak S_p\times\mathfrak S_{n-p}$ and Littlewood--Richardson coefficients, while the $p=r$ piece is the Kronecker square of a hook Specht module.  These examples show that the Orlik--Solomon sheaf homology naturally produces interesting representations of symmetric groups.

The paper is organized as follows.  Section 2 recalls the necessary background on geometric lattices, geometric semilattices, poset sheaf homology, Orlik--Solomon algebras, and defines the Orlik--Solomon sheaf. Section 3 proves the main homology computation, the Euler characteristic formula, and the basis construction.  Section 4 discusses examples, with emphasis on symmetric group representations arising from Boolean lattices, rank-two lattices, and uniform matroids.

\section{Preparations}

\subsection{Posets and geometric lattices}
We refer to \cite{Stanley2012} for a general introduction to posets and \cite{Wachs2007} for poset topology. All posets in what follows are assumed to be {\it finite}.

Let $P=(P,\leq)$ be a poset. For $x< y$ in $P$, we say $y$ {\it covers} $x$, written as $x\lessdot y$ or $y \gtrdot x$, if $x\leq z\leq y$ implies $z=x$ or $z=y$. For $x\in P$, define $P_{\leq x}=\{y\in P\mid y\leq x\}$. We also define $P_{\geq x},P_{<x}$ and $P_{>x}$ similarly. For $x\leq y$ in $P$, we define the interval $[x,y]=[x,y]_P=\{z\in P\mid x\leq z\leq y\}=P_{\geq x}\cap P_{\leq y}$. A subposet $Q$ of $P$ is called an {\it ideal} if $x\in Q,y\in P$ and $y\leq x$ implies $y\in Q$. An ideal is principal if it is of the form $P_{\leq x}$. Similarly, a subposet $Q$ of $P$ is called a {\it filter} if $x\in Q,y\in P$ and $x\leq y$ implies $y\in Q$. A filter is principal if it is of the form $P_{\geq x}$. 

For a poset $P$, we define the M\"obius function $\mu=\mu_P$ on intervals $[x,y]$ of $P$ recursively as
\[
\mu(x,y)=\begin{cases}
    1, & x=y,\\
    -\sum_{x\leq z<y}\mu(x,z), &x<y.
\end{cases}
\]

A chain of $P$ is a non-empty totally ordered subposet $\sigma=(x_0<x_1<\cdots<x_m)$. We say this chain has length $\ell(\sigma)=m$. All chains of $P$ form a simplicial complex, called the {\it order complex} $\Delta P$ of $P$. Note that the dimension of $\Delta P$ is the maximum of $\{\ell(\sigma)\mid \sigma\in\Delta P\}$. If every maximal chain of $P$ has the same length $\ell$, we say $P$ is {\it graded} (or {\it ranked}) of rank $\ell$. In this case, we define the rank function $\rank:P\to\{0,1,\ldots,\ell\}$ by $\rank(s)=0$ if $s\in P$ is minimal and $\rank(y)=\rank(x)+1$ whenever $x\lessdot y$. In a graded poset $P$, we denote
\[
P_k:=\{x\in P\mid \rank(x)=k\},
\]
and the truncation
\[
P_{\leq k}:=\{x\in P\mid \rank(x)\leq k\}, ~P_{\geq k}:=\{x\in P\mid \rank(x)\geq k\}.
\]

We say a poset $P$ has a bottom $\hat0$ if there is an element $\hat0\in P$ such that $\hat0\leq x$ for all $x\in P$. Similarly, we say $P$ has a top $\hat 1$ is there is an element $\hat 1$ such that $x\leq \hat1$ for all $x\in P$. There is a useful formula connecting M\"obius function to poset topology.
\begin{thm}[Philip Hall's theorem, \cite{Wachs2007} Proposition 1.2.6]\label{Hall}
    For a poset $P$ with $\hat{0}$ and $\hat1$,
    \[
    \mu(\hat0,\hat1)=\widetilde{\chi}(\Delta(P\setminus\{\hat0,\hat1\})).
    \]
\end{thm}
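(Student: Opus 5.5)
The plan is the standard chain-counting argument: express both sides as alternating sums over chains from $\hat0$ to $\hat1$, and match them term by term.

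First I would set notation. Write $\bar P = P\setminus\{\hat0,\hat1\}$. If $\hat0=\hat1$, then $\bar P$ is empty and $\Delta\bar P$ is the empty complex, so $\widetilde{\chi}=-1$; this does not match $\mu(\hat0,\hat0)=1$. I would therefore assume $\hat0<\hat1$, which is the intended setting, and record the degenerate case separately.

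Next I would compute the reduced Euler characteristic. For the simplicial complex $\Delta\bar P$ it is
\[
\widetilde{\chi}(\Delta\bar P)=\sum_{i\ge -1}(-1)^i f_i=\sum_{i\ge -1}(-1)^i c_i,
\]
where $c_i$ is the number of chains $x_0<\cdots<x_i$ in $\bar P$ and $c_{-1}=1$ counts the empty face. Appending $\hat0$ at the bottom and $\hat1$ at the top gives a bijection between such chains and chains $\hat0=y_0<y_1<\cdots<y_{i+2}=\hat1$ in $P$ with $i+2$ steps. Let $a_k$ be the number of chains from $\hat0$ to $\hat1$ with exactly $k$ steps. Then
\[
\widetilde{\chi}(\Delta\bar P)=\sum_{k\ge1}(-1)^k a_k .
\]

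The key step is to show $\mu(\hat0,\hat1)=\sum_{k\ge1}(-1)^k a_k$; this is Philip Hall's chain formula. I would prove the general version: for all $x<y$,
\[
\mu(x,y)=\sum_{k\ge1}(-1)^k a_k(x,y),
\]
where $a_k(x,y)$ counts the $k$-step chains from $x$ to $y$. The proof is by induction on the length of $[x,y]$.

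- \emph{Recursion for the chain counts.} Classifying a $k$-step chain by its penultimate element $z$, with $x\le z<y$, gives $a_k(x,y)=\sum_{x\le z<y}a_{k-1}(x,z)$, using the conventions $a_0(x,z)=\delta_{xz}$ and $a_{k}(x,x)=0$ for $k\ge1$.
- \emph{Recursion for the alternating sum.} Define $\nu(x,z)=\sum_{k\ge0}(-1)^k a_k(x,z)$, so that $\nu(x,x)=1$. Summing the recursion with signs yields $\nu(x,y)=-\sum_{x\le z<y}\nu(x,z)$ for $x<y$.

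This is exactly the defining recursion of $\mu$, with the same initial value $\nu(x,x)=1$, so $\nu=\mu$. Specializing to $x=\hat0$, $y=\hat1$ completes the proof.

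The only delicate point is bookkeeping: the index shift by $2$, which preserves the sign, and the treatment of the empty chain as the $(-1)$-dimensional face. These must be handled carefully so that the $k=1$ term (the single chain $\hat0<\hat1$) matches the empty face with sign $-1$.
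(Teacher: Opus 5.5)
Your proposal is correct and is the standard chain-counting proof. The paper does not prove this statement itself; it cites Wachs, where the result likewise comes from identifying $\widetilde{\chi}(\Delta(P\setminus\{\hat0,\hat1\}))$ with the signed count of chains from $\hat0$ to $\hat1$ (whether you check Hall's chain formula by the defining recursion, as you do, or by expanding $\zeta^{-1}$ in the incidence algebra makes no difference). Your caveat about $\hat0=\hat1$ is right and harmless, since the paper only applies the theorem to $I_z\cup\{\hat1_z\}$, where $\hat1_z$ is a newly adjoined top.
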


For a graded poset $P$ with a bottom $\hat0$, the Poincar\'e polynomial of $P$ is defined as
\[
\pi(P,t):=\sum_{x\in P}\mu(\hat0,x)(-t)^{\rank(x)}.
\]

A {\it lattice} $L$ is a poset in which the operations {\it join} and {\it meet} are defined for all pairs $x,y\in L$. Recall that the join $x\vee y$ and the meet $x\wedge y$ represent the least upper bound and the greatest lower bound for $x,y\in L$ respectively. If only meet (resp. join) is defined for all pairs, we shall call the poset a meet (resp. join) {\it semilattice}.
Note that in a lattice $L$, join and meet are associative so we write e.g. $\vee\{x,y,z\}=x\vee y\vee z$. A lattice $L$ has a bottom $\hat{0}$ and a top $\hat{1}$. 
We denote by $A=A(L)$ the set of {\it atoms} of $L$, that are elements covering $\hat{0}$. We call a lattice $L$ {\it atomic} if every element is a join of atoms, where we agree that an atom is the join of itself and $\hat{0}$ is the join of $0$ atom.

A ranked lattice $L$ is {\it semimodular} if the following {\it semimodular inequality} holds for all $x,y\in L$,
\[
\rank(x)+\rank(y)\geq \rank(x\wedge y)+\rank(x\vee y).
\]

\begin{defn}
    A {\it geometric lattice} $L$ is a ranked lattice, which is both atomic and semimodular. An element of a geometric lattice is called a \emph{flat}.
\end{defn}

\begin{ex}
    Let $\A$ be a central hyperplane arrangement, the intersection lattice $L(\A)$ is a geometric lattice (\cite{Stanley2012} Proposition 3.11.2). More generally, a lattice is geometric if and only if it is the flat lattice of a matroid (\cite{Oxley2011} Theorem 1.7.5). In fact, there is a bijection between geometric lattices and {\it simple} matroids (\cite{Oxley2011} p.54).
\end{ex}

There is also an affine analogue of geometric lattice. Let $S$ be a ranked meet semilattice. The atoms of $S$ are the rank $1$ elements. A set $I$ of atoms of $S$ is called {\it independent} if the join $\vee I$ is defined and $\rank(\vee I)=|I|$.

\begin{defn}
    A {\it geometric semilattice} is a ranked meet semilattice, which is atomic and the collection of independent sets of atoms form a matroid.
\end{defn}

There is an alternative description of geometric semilattices.
\begin{thm}[\cite{Wachs1986} Theorem 2.1]
    A ranked meet semilattice $S$ is geometric if and only if
    \begin{enumerate}
        \item every interval of $S$ is a geometric lattice, and
        \item whenever $\rank(x)<\rank(\vee I)=|I|$, where $x\in S$ and $I$ is an independent set of atoms of $S$, there is an atom $a\in I$ such that $a\not\leq x$ and $a\vee x$ exists.
    \end{enumerate}
\end{thm}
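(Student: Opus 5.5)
The plan is to prove the two implications separately. Both directions rest on one observation: in a ranked meet semilattice $S$ with bottom $\hat0$, each principal ideal $S_{\leq y}$ is a finite meet semilattice with a top, and therefore a lattice. Consequently a set of atoms has a join in $S$ as soon as the atoms have a common upper bound, and that join can be computed inside $S_{\leq y}$ for any such upper bound $y$.

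\emph{($\Rightarrow$)} Assume the independent sets of atoms form a matroid $M$. First I would show that each $[\hat0,y]$ is geometric.
\begin{enumerate}
\item Atomicity of $[\hat0,y]$ is inherited from $S$.
\item Inside $[\hat0,y]$ every set of atoms has a join. So for $z\leq y$ I claim $\rank(z)$ equals the $M$-rank of the set of atoms below $z$. Take a maximal chain $\hat0\lessdot z_1\lessdot\cdots\lessdot z_k=z$ and, using atomicity, choose atoms $a_i\leq z_i$ with $a_i\not\leq z_{i-1}$. I expect $\{a_1,\dots,a_k\}$ to be independent with join $z$; I would verify this by induction on $k$.
\item Semimodularity of $[\hat0,y]$ then follows from submodularity of the matroid rank. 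Here one uses that the atoms below $z\wedge w$ are exactly the common atoms below $z$ and $w$, and that the atoms below $z$ and $w$ together span $z\vee w$.
\end{enumerate}
A general interval $[x,y]$ is an interval of the geometric lattice $[\hat0,y]$, hence is itself geometric. This gives condition (1).

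For condition (2), let $x$ and $I$ be as in the statement, and let $J$ be a maximal independent set of atoms below $x$. By the rank identity above, $\vee J=x$ and $|J|=\rank(x)<|I|$. Matroid augmentation gives $a\in I\setminus J$ with $J\cup\{a\}$ independent. Then $x\vee a=\vee(J\cup\{a\})$ exists and has rank $\rank(x)+1$, so $a\not\leq x$.

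\emph{($\Leftarrow$)} Assume conditions (1) and (2).
\begin{enumerate}
\item \emph{Hereditary property.} If $I$ is independent and $I'\subseteq I$, then $I'$ lies in the geometric lattice $[\hat0,\vee I]$, so $\vee I'$ exists. A subset of an independent set of atoms in a geometric lattice is independent, which gives $\rank(\vee I')=|I'|$.
\item \emph{Augmentation.} Let $I,J$ be independent with $|I|>|J|$, and put $x=\vee J$. Then $\rank(x)=|J|<|I|=\rank(\vee I)$, so condition (2) supplies $a\in I$ with $a\not\leq x$ and $a\vee x$ existing. In the geometric lattice $[\hat0,a\vee x]$, semimodularity gives that $x\vee a$ covers $x$. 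Hence $\rank(\vee(J\cup\{a\}))=|J|+1$, so $J\cup\{a\}$ is independent.
\end{enumerate}
Atomicity is assumed in both directions, so this completes the matroid axioms.

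The main obstacle is step 2 of ($\Rightarrow$): identifying the rank function of $[\hat0,y]$ with the matroid rank, so that semimodularity transfers from $M$. Showing that a chain-built set of atoms is independent seems to need the matroid hypothesis in an essential way, and I would expect the induction on $k$ to be where the real work lies.
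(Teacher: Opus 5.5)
The paper gives no proof of this statement; it is quoted from Wachs--Walker, so your argument has to stand on its own. Your ($\Leftarrow$) direction is fine, and so is your derivation of (2) in ($\Rightarrow$). One small correction in ($\Leftarrow$): atomicity is not a hypothesis there. It does follow from (1), since $y$ is a join of atoms in $[\hat0,y]$ and joins in $[\hat0,y]$ agree with joins in $S$.

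The genuine gap is step 3 of ($\Rightarrow$). Write $A_u$ for the set of atoms below $u$. Submodularity of $r_M$ gives $\rank(z)+\rank(w)\geq\rank(z\wedge w)+\rank(z\vee w)$ only if you know $r_M(A_z\cup A_w)\geq\rank(z\vee w)$. Your step 2 identifies $r_M(A_u)$ with $\rank(u)$ only for full sets $A_u$. The lattice fact $\vee(A_z\cup A_w)=z\vee w$ says nothing about the matroid rank of $A_z\cup A_w$. Passing from ``lattice span'' to ``matroid rank equals the rank of the join'' is exactly where the matroid hypothesis must enter, and you do not use it there.

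Without that hypothesis the step fails. Take the ranked atomic lattice with atoms $a,b,c,d$, rank-two elements $a\vee c$, $b\vee c$, $a\vee d$, $b\vee d$, $c\vee d$, and a top of rank $3$. For every $u$, the largest independent set of atoms below $u$ has size $\rank(u)$, so the content of your step 2 holds. The set $\{a,b\}$ lattice-spans the top, yet its largest independent subset has size $1$, and semimodularity fails for $a,b$. (There $\{a,b,c\}$ is independent but $\{a,b\}$ is not.)

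Conversely, the place you flagged as the main obstacle is not one. From $z_{i-1}<z_{i-1}\vee a_i\leq z_i$ and $z_i\gtrdot z_{i-1}$ you get $a_1\vee\cdots\vee a_i=z_i$. So the chain-built set is independent directly from the definition, with no matroid input.

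The missing idea is to apply the hereditary axiom by padding:
\begin{enumerate}
\item Let $X$ be a set of atoms with join $u$, and suppose $\rank(u)>|X|$.
\item Since $u$ has at least $\rank(u)$ atoms below it (your chain construction), add $\rank(u)-|X|$ further atoms below $u$. The result has size $\rank(u)$ and join $u$, so it is independent.
\item By the hereditary axiom $X$ is then independent, which is a contradiction. Hence $\rank(\vee X)\leq|X|$ whenever the join exists.
\end{enumerate}
Consequently, for independent $J$ and an atom $b\not\leq\vee J$ with $\vee J\vee b$ existing, $\vee J\vee b\gtrdot\vee J$. This forces every maximal independent subset $J$ of any $X\subseteq A_y$ to satisfy $\vee J=\vee X$, that is, $r_M(X)=\rank(\vee X)$. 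With this identity for arbitrary $X$ (not just $X=A_u$), your submodularity argument in step 3 goes through.
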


An important characterization of geometric semilattices is the following theorem.
\begin{thm}[\cite{Wachs1986} Theorem 3.2]\label{completion}
    A poset $S$ is a geometric semilattice if and only if there is a geometric lattice $L$ and an atom $a$ of $L$, such that
    \[
    S=L\setminus L_{\geq a}.
    \]
    Furthermore, $L$ and $a$ are uniquely determined by $S$. We call the pointed lattice $(L,a)$ the completion of $S$. 
\end{thm}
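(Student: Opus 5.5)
Fix a ranked meet semilattice $S$. I will prove the two directions separately, taking the intrinsic characterization of geometric semilattices by conditions (1) and (2) of \cite{Wachs1986} Theorem 2.1 as the working definition.

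\emph{($\Leftarrow$).} Let $L$ be a geometric lattice, $a$ an atom, and $S=L\setminus L_{\geq a}$.
\begin{enumerate}
\item $S$ is an ideal of $L$ containing $\hat0$. Hence meets computed in $L$ stay in $S$, so $S$ is a meet semilattice. It is ranked by the restriction of $\rank$.
\item Every interval of $S$ is an interval of $L$, hence a geometric lattice. This gives condition (1).
\item Atomicity: if $x\in S$ is a join of atoms of $L$, none of those atoms can be $a$, since $x\not\geq a$. So $x$ is a join of atoms of $S$.
\item Independent sets: a set $I$ of atoms of $S$ is independent in $S$ exactly when it is independent in the matroid of $L$ and $\vee_L I\not\geq a$. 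By semimodularity this means $I\cup\{a\}$ is independent in $L$. So the independent sets of $S$ are the independent sets of the contraction $M/a$ restricted to $A\setminus\{a\}$. These form a matroid.
\item Condition (2): take $x\in S$ and $I$ independent with $\rank(x)<|I|$. Independence of $I\cup\{a\}$ together with the augmentation axiom, applied in the contraction by $x\vee a$, produces an atom $b\in I$ with $b\not\leq x$ and $x\vee b\not\geq a$. Then $x\vee b$ exists in $S$.
\end{enumerate}

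\emph{($\Rightarrow$).} Given $S$, I would construct the completion directly. Let $M$ be the matroid of independent atom sets of $S$. Form a new matroid $N$ on $A(S)\sqcup\{a\}$ by declaring $I\cup\{a\}$ independent iff $I$ is independent in $M$. The sets $I$ alone are declared independent iff they are independent in the principal truncation, or equivalently iff they are independent in the free coextension of $M$ by $a$.

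A cleaner route uses flats rather than independent sets. Let $L$ consist of:
\begin{itemize}
\item the elements of $S$;
\item formal elements $\bar{F}$, one for each set $F$ of atoms of $S$ that is ``closed but without join'', meaning $F$ is closed in the closure operator generated by joins in $S$ but $\vee F$ does not exist.
\end{itemize}
The rank of $\bar{F}$ is one more than the rank of $F$ in $M$. Put $a=$ the new element covering $\hat0$, and set $L_{\geq a}\cong$ the lattice of flats of $M$.

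Concretely, I would take $L=S\sqcup L(M)$, where $L(M)$ is the flat lattice of $M$ and the copy of each flat $F$ has rank $\rank_M(F)+1$. The order is the one in $S$, the one in $L(M)$, and $x\leq F$ for $x\in S$ whenever all atoms below $x$ lie in $F$.

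The steps to check are:
\begin{enumerate}
\item this order makes $L$ a lattice (joins inside $S$ that fail to exist become closures in $L(M)$);
\item $L$ is atomic, with atoms $A(S)\cup\{a\}$ and $a$ identified with the flat $\emptyset$;
\item $L$ is semimodular;
\item $L\setminus L_{\geq a}=S$, which holds by construction.
\end{enumerate}

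The main obstacle is semimodularity of $L$ across the two parts. Verifying it requires condition (2) of the intrinsic characterization, used to show that the rank of $x\vee F$ behaves correctly when $x\in S$. I would reduce this to checking the covering criterion: if $x\wedge y\lessdot x$, then $y\lessdot x\vee y$. I would split into cases according to whether $x$ and $y$ lie in $S$ or in $L(M)$.

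\emph{Uniqueness.} Suppose $S=L\setminus L_{\geq a}$ for some geometric lattice $L$ and atom $a$. By step 4 of ($\Leftarrow$), the matroid of $L$ is determined by $S$: a set is $L$-independent iff it, together with $a$, has the required rank behaviour in the matroid of $S$. So $L$ is recovered as the flat lattice of that determined matroid, and $a$ is recovered as the added point. This forces $(L,a)$ up to isomorphism.
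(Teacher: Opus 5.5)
There is a genuine gap. Your ($\Leftarrow$) direction is fine. The model you finally settle on is also exactly the completion: $L=S\sqcup L(M)$, with $x\le F$ iff $A_x\subseteq F$ (where $A_x$ is the set of atoms below $x$) and $a$ the flat $\emptyset$.

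Your other two descriptions rest on a misconception that also breaks your uniqueness paragraph: the completion is not determined by the matroid $M=M(S)$ alone. For example, $S_1=\{\hat0,b,c,d,\hat1\}$ (rank two, three atoms) and $S_2=\{\hat0,b,c,d,bc,bd,cd\}$ both have matroid $U_{2,3}$. Their completions are $L(U_{2,3}\oplus U_{1,1})$ and $L(U_{3,4})$ respectively. The free coextension of $U_{2,3}$ is $U_{3,4}$, so your first route, applied to $S_1$, produces the completion of $S_2$. The list of \emph{formal elements} $\bar F$ fails differently: it does not even contain $a$ (the empty set has a join), and it adds nothing when $S$ is a lattice.

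In the uniqueness argument, step 4 only tells you which sets $I\cup\{a\}$ are independent in $L$. That recovers the contraction $M(L)/a=M$, not $M(L)$ itself. A set $J$ of atoms of $S$ has rank $\rank_M(J)$ or $\rank_M(J)+1$ in $L$ according as $J$ is or is not bounded above in $S$, and this depends on the poset $S$, not just on $M$. Uniqueness should instead come from your flat model. For any completion $(L,a)$, the map $y\mapsto A_y\setminus\{a\}$ identifies $L_{\ge a}$ with $L(M(L)/a)=L(M)$. For $x\in S$, atomicity gives $x\le y$ iff $A_x\subseteq A_y$. Hence $(L,a)\cong S\sqcup L(M)$.

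For existence, you list the verifications but carry out none of those with content, so ($\Rightarrow$) is still a plan. Two things must actually be proved.

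First, if $z=x\vee_S y$ exists, it must remain the join in $L$; that is, every flat $F\supseteq A_x\cup A_y$ must contain $A_z$. This is not automatic, because $A_z$ need not be closed in $M$ (in $S_2$, $\mathrm{cl}_M\{b,c\}=\{b,c,d\}$). It holds because $M|_{A_z}$ is the matroid of the geometric lattice $[\hat0,z]$, so $\rank_M(A_x\cup A_y)=\rank z=\rank_M(A_z)$.

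Second, semimodularity. By the same fact, $x\wedge F$ is the element of $[\hat0,x]$ with atom set $A_x\cap F$. Also, the only cover of $w\in S$ lying in $L(M)$ is $\mathrm{cl}_M(A_w)$, one rank higher. With these two observations the mixed cases of your covering criterion are routine.

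The essential case is $x,y\in S$ with no common upper bound in $S$, and $x=w\vee b$ covering $w=x\wedge y$. You need $b\in\mathrm{cl}_M(A_y)$, so that $x\vee y=\mathrm{cl}_M(A_y)$ covers $y$. Suppose instead $b\notin\mathrm{cl}_M(A_y)$. Then a basis $I$ of $A_y\cup\{b\}$ has $|I|=\rank y+1$. Condition (2) yields an atom of $I$ not below $y$ whose join with $y$ exists, and that atom must be $b$. Then $y\vee_S b\ge x$ is an upper bound of $x$ and $y$ in $S$, a contradiction. This is exactly where condition (2) enters, as you anticipated, but it has to be written out.
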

To produce a geometric semilattice, we have a more flexible way.
\begin{prop}[\cite{Wachs1986} Corollary 4.5]\label{filtercomplement}
    Let $L$ be a geometric lattice and $x\in L$. Then $L\setminus L_{\geq x}$ is a geometric semilattice.
\end{prop}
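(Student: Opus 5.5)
We reduce the statement to Theorem \ref{completion} by building an explicit pair $(L',a)$, with $L'$ a geometric lattice and $a$ an atom of $L'$, such that $L\setminus L_{\geq x}\cong L'\setminus L'_{\geq a}$.

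If $x=\hat0$, then $L\setminus L_{\geq x}$ is empty, a degenerate case that we handle by convention. So assume $x\neq \hat0$.

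The natural construction is a principal extension of the underlying simple matroid $M$ of $L$ by a new point $e$ placed generically on the flat $x$. Let $E$ be the set of atoms of $L$, and define the rank function on $E\cup\{e\}$ by
\[
r'(S)=r(S), \qquad r'(S\cup e)=\begin{cases} r(S), & x\leq \vee S,\\ r(S)+1, & \text{otherwise.}\end{cases}
\]
This is Crapo's principal extension along the modular cut $L_{\geq x}$. The first step is to check that a principal filter of a geometric lattice is a modular cut. It is clearly a filter. For a modular pair $y,z\in L_{\geq x}$ we have $y\wedge z\geq x$, so the meet stays in the filter. This gives a matroid $M'$. Its simplification has $e$ as a new atom, since $x\neq\hat0$ and so $e$ is not a loop. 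If $x$ is itself an atom, $e$ becomes parallel to $x$ and we simply take $L'=L$, $a=x$.

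The second step is to identify the flats of $M'$ not containing $e$. A set $F\subseteq E$ is a flat of $M'$ exactly when $F$ is a flat of $M$ and $e\notin \mathrm{cl}'(F)$, that is, exactly when $x\not\leq F$. Adding elements does not change the ordering among these flats. Hence the flats of $M'$ not above $a=e$ form a poset isomorphic to $L\setminus L_{\geq x}$. Theorem \ref{completion} then gives the conclusion.

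The main obstacle is the matroid-theoretic verification that $r'$ is submodular, i.e.\ the modular cut theorem. If we prefer not to cite Crapo, we verify the criteria of \cite{Wachs1986} Theorem 2.1 directly instead:
\begin{enumerate}
\item Intervals of $L\setminus L_{\geq x}$ are intervals of $L$, hence geometric lattices. The set is closed under meets, since it is an ideal containing $\hat0$.
\item For the augmentation condition, suppose $\rank(y)<|I|=\rank(\vee I)$ with $\vee I\not\geq x$. By semimodularity in $L$ we can choose $a\in I$ with $a\not\leq y$. Then $a\vee y\not\geq x$ must also hold. This follows because we can take $a\leq \vee I$ so that $y\vee a$ stays in the ideal generated appropriately; we argue by rank-counting within $L_{\leq \vee I\vee y}$, using the matroid exchange property in the atoms.
\end{enumerate}
Of the two routes, the exchange-axiom check in the second item is where care is needed, so the extension argument is the preferred one.
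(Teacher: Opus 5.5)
Your main argument is correct, and it takes a genuinely different route from the paper, which offers no argument of its own and simply quotes \cite{Wachs1986} Corollary 4.5. You instead derive the statement from the ``if'' direction of Theorem~\ref{completion}, via four checks:
\begin{enumerate}
\item $L_{\geq x}$ is trivially a modular cut, since any two of its members have meet $\geq x$.
\item Crapo's theorem then gives the principal extension $M'$ of the simple matroid of $L$ by a point $e$ placed freely on the flat $x$.
\item The flats of $M'$ avoiding $e$ are exactly the flats $F$ of $L$ with $x\not\leq F$, ordered by inclusion.
\item $\mathrm{cl}'(e)=\{e\}$ is a new atom unless $x$ is itself an atom; in that case $L'=L$, $a=x$ works directly.
\end{enumerate}
All of these checks are right. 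The degenerate case $x=\hat0$ does not matter, because the paper only applies the proposition to $z\in L_p$ with $p>0$.

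The price of your route is reliance on two nontrivial black boxes: Crapo's modular-cut theorem and Theorem~\ref{completion}. The payoff comes from the uniqueness clause of Theorem~\ref{completion}: you have identified the completion $(\Lambda_z,\gamma_z)$ of Section 3.3 explicitly. Here $\Lambda_z$ is the lattice of flats of the principal extension of $L$ by a point freely placed on $z$, and $\gamma_z$ is that point (with $\Lambda_z=L$, $\gamma_z=z$ when $z$ is an atom). This makes the indexing set $\beta\NBC(\Lambda_z)$ concretely computable, which the paper leaves implicit.

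Your fallback through \cite{Wachs1986} Theorem 2.1, on the other hand, contains a false step. An arbitrary $a\in I$ with $a\not\leq y$ need not satisfy $a\vee y\not\geq x$. In $B_3$ take $x=\{1,2\}$, $y=\{1\}$, $I=\{2,3\}$. Then $\vee I=\{2,3\}\not\geq x$ and $\rank y<|I|$, but $a=2$ gives $y\vee a=x$; only $a=3$ works. Your rank-counting hint does not address the choice of $a$.

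The correct argument is short:
\begin{enumerate}
\item The set $I_2=\{a\in I\mid a\not\leq y\}$ is nonempty, since otherwise $\rank(\vee I)\leq\rank y<|I|$.
\item Suppose $y\vee a\geq x$ for every $a\in I_2$. Each $y\vee a$ covers $y$ by semimodularity, and $y<x\vee y\leq y\vee a$. So each such $y\vee a$ equals $x\vee y$, which has rank $\rank y+1\leq|I|$.
\item Hence $\vee I\leq x\vee y$ and $\rank(\vee I)=|I|\geq\rank(x\vee y)$. This forces $\vee I=x\vee y\geq x$, a contradiction.
\end{enumerate}
With this repair the axiom route is entirely elementary, using only semimodularity of $L$, and needs neither Crapo's theorem nor Theorem~\ref{completion}.
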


\subsection{Sheaf homology of posets}
We follow Everitt--Turner's approach \cite{Everitt2022}.
Let $R$ be a commutative ring with $1$ and $R\Mod$ the category of $R$-modules. A sheaf of $R$-modules on a poset $P$ is a contravariant functor $\FF:P\to R\Mod$, where $P$ is interpreted as a category in the usual way. For $x\leq y$ in $P$, we write $\FF^y_x: \FF(y)\to \FF(x)$ for the structure map.
Note that such $\FF$ is in fact a presheaf, but in our discrete setting, there is no essential difference between presheaves and sheaves.

For a sheaf $\FF$ on $P$, the colimit $\colim^P\FF$ is the quotient of $\bigoplus_{x\in P}\FF(x)$ by the submodule generated by $a_y-\FF^y_x(a_y)$ for $a_y\in \FF(y)$ and $x\leq y$. The higher colimit $\colim_*^P$ is defined as the left derived functor $L_*\colim^P$. The homology of $P$ with coefficients in the sheaf $\FF$ is the higher colimit evaluated at $\FF$
\[
H_*(P;\FF)=\colim_*^P\FF.
\]
The sheaf homology can be computed from a chain complex. For a sheaf $\FF$ on $P$, let
\[
T_n(P;\FF)=\bigoplus_{\sigma=(x_n<\cdots< x_0)\in\Delta P}\FF(x_0).
\]
Let $\sigma=(x_n< \cdots< x_0)\in \Delta P$ and $s\in \FF(x_0)$, write $s_{\sigma}$ for the element of $T_n(P;\FF)$ that has value $s$ in the component indexed by $\sigma$ and value $0$ in all other components. Then define the differential $d: T_n(P;\FF)\to T_{n-1}(P;\FF)$ by
\[
d(s_{\sigma})=\FF^{x_0}_{x_1}(s)_{d_0\sigma}+\sum_{i=1}^n(-1)^is_{d_i\sigma},
\]
where $d_i(x_n<\cdots< x_0)=(x_n<\cdots< \widehat{x_i}<\cdots< x_0)$ ($\widehat{x_i}$ means deleting $x_i$). 
\begin{prop}[\cite{Gabriel1967} Appendix II, see also \cite{Everitt2022} Section 2.2]\label{chaincomplex}
    We have an isomorphism
\[
H_*(P;\FF)\cong H(T_*(P;\FF)).
\]
\end{prop}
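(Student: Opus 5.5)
The plan is to realize $T_*(P;\FF)$ as $\colim^P$ applied to an explicit resolution of $\FF$ by $\colim^P$-acyclic sheaves, and then conclude by the usual acyclic-resolution principle for left derived functors. Throughout, sheaves are contravariant functors $P\to R\Mod$, and this functor category is abelian with enough projectives.

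\textbf{Step 1: the building blocks are acyclic.} For $x\in P$ and an $R$-module $M$, let $M_{[x]}$ be the sheaf with $M_{[x]}(y)=M$ if $y\leq x$ and $M_{[x]}(y)=0$ otherwise. Its structure maps are the identity between elements below $x$ and zero otherwise. By Yoneda, $\mathrm{Hom}(M_{[x]},\mathscr G)\cong \mathrm{Hom}_R(M,\mathscr G(x))$, so $M_{[x]}$ is projective when $M$ is projective. Directly from the definition of the colimit, $\colim^P M_{[x]}=M$, since every component is identified with the one at $x$. Now let $M$ be arbitrary and take a free resolution $F_\bullet\to M$. The functor $M\mapsto M_{[x]}$ is exact, so $(F_\bullet)_{[x]}\to M_{[x]}$ is a projective resolution. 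Applying $\colim^P$ returns $F_\bullet$, which is exact in positive degrees. Hence $\colim^P_k M_{[x]}=0$ for $k>0$, and finite direct sums of such sheaves are also $\colim^P$-acyclic.

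\textbf{Step 2: a bar-type resolution.} Define $B_n(y)=\bigoplus_{y\leq x_n<\cdots<x_0}\FF(x_0)$, so that $B_n=\bigoplus_{\sigma}\FF(x_0)_{[x_n]}$ is a sum of blocks from Step 1. Let $d:B_n\to B_{n-1}$ be given by the same formula as the differential on $T_*$, and let the augmentation $B_0\to\FF$ send $s_{(x_0)}\in\FF(x_0)$ with $y\leq x_0$ to $\FF^{x_0}_y(s)$. Checking $d^2=0$ and naturality in $y$ is routine simplicial bookkeeping, using functoriality $\FF^{x_1}_{x_2}\FF^{x_0}_{x_1}=\FF^{x_0}_{x_2}$.

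\textbf{Step 3: exactness, stalkwise.} Fix $y$. The augmented complex $B_\bullet(y)\to\FF(y)$ is built from chains in $P_{\geq y}$, and $P_{\geq y}$ is a cone with apex $y$. I will define a contracting homotopy as follows.
\begin{itemize}
\item On a chain with $x_n>y$, set $h(s_\sigma)=\pm s_{(y<x_n<\cdots<x_0)}$.
\item On a chain with $x_n=y$, set $h(s_\sigma)=0$.
\item On the augmentation term, set $h(t)=t_{(y)}$ for $t\in\FF(y)$.
\end{itemize}
Then I verify $dh+hd=\mathrm{id}$. I expect this verification to be the main obstacle. The sign must be chosen so that deleting the appended $y$ cancels correctly. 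Chains already starting at $y$ must be handled separately: for them, $hd$ re-appends $y$ to the faces and recovers $\sigma$. The $d_0$-term, which carries the restriction map $\FF^{x_0}_{x_1}$, needs attention only when $n=0$, and there the augmentation takes care of it. The first thing I would do is check the low degrees $n=0,1$ explicitly to fix the sign convention.

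\textbf{Step 4: conclusion.} By Steps 2 and 3, $B_\bullet\to\FF$ is a resolution of $\FF$, and by Step 1 each $B_n$ is $\colim^P$-acyclic. Derived functors can be computed from acyclic resolutions (the standard dimension-shifting argument), so $H_*(P;\FF)=\colim^P_*\FF\cong H(\colim^P B_\bullet)$. By Step 1, $\colim^P B_n=\bigoplus_{x_n<\cdots<x_0}\FF(x_0)=T_n(P;\FF)$. The induced differential is exactly the one defining $T_*$, because on each block $\FF(x_0)_{[x_n]}$ the colimit is evaluated at the top element $x_n$ of its support, and the restriction maps there are identities. This gives the isomorphism in Proposition~\ref{chaincomplex}.
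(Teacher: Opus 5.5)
Your argument is correct. The paper gives no proof of this proposition; it only quotes Gabriel--Zisman and Everitt--Turner. What you wrote is a complete, self-contained version of the standard argument. You resolve $\FF$ by $B_\bullet=\bigoplus_\sigma \FF(x_0)_{[x_n]}$, whose terms are $\colim^P$-acyclic by Step~1. You check exactness stalkwise using that $P_{\geq y}$ is a cone with apex $y$. Then you read off $\colim^P B_\bullet=T_*(P;\FF)$.

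The sign you left open in Step~3 is $h(s_\sigma)=(-1)^{n+1}s_{(y<x_n<\cdots<x_0)}$ for $\sigma$ of length $n$ with $x_n>y$. To verify it, use $d_i(y\sigma)=y\,d_i\sigma$ for $0\leq i\leq n$, with the same restriction $\FF^{x_0}_{x_1}$ on the $d_0$-term when $n\geq 1$. These terms of $dh(s_\sigma)$ cancel against $hd(s_\sigma)$. The last face of $y\sigma$ contributes $(-1)^{n+1}(-1)^{n+1}s_\sigma=s_\sigma$. For $n=0$, the restricted term $\FF^{x_0}_y(s)_{(y)}$ is cancelled by $h\epsilon$.

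For $\sigma$ with $x_n=y$ and $n\geq 1$, every face except $d_n\sigma$ still starts at $y$ and is killed by $h$. Hence $hd(s_\sigma)=(-1)^n h(s_{d_n\sigma})=(-1)^n(-1)^n s_\sigma=s_\sigma$, and $h\epsilon(s_{(y)})=s_{(y)}$ when $n=0$. So your phrase ``re-appends $y$ to the faces'' should read ``re-appends $y$ to the one face omitting $y$''.

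An equally standard alternative is to show that $\FF\mapsto H(T_*(P;\FF))$ is a universal $\delta$-functor. Each $T_n$ is exact in $\FF$. $H_0(T_*(P;\FF))$ is visibly $\colim^P\FF$, since the image of $d$ on $1$-chains is generated exactly by the colimit relations. On the projectives $R_{[x]}$ one has $T_*(P;R_{[x]})=C_*(\Delta P_{\leq x})$, which is acyclic in positive degrees because $P_{\leq x}$ is a cone with apex $x$. That route contracts principal ideals instead of principal filters and needs no explicit resolution or homotopy. Yours has the merit of exhibiting the resolution of $\FF$ concretely.
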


Some easy observations of poset sheaf homology are as follows.
\begin{lem}[\cite{Everitt2022} Lemma 1]\label{basiclemma}
    Let $P$ be a poset and $\FF$ a sheaf on $P$.
    \begin{enumerate}
        \item If $P$ is graded, then $H_i(P;\FF)\neq 0$ only if $0\leq i\leq \rank P$.
        \item If $P$ has a top or a bottom, and $\FF=\underline{M}$ is the constant sheaf of $R$-module $M$, then $H_0(P;\underline{M})=M$ and $H_i(P;\underline{M})=0$ for $i>0$.
        \item If $P$ has a bottom $\hat{0}$, then $H_0(P;\FF)=\FF(\hat{0})$ and $H_i(P;\FF)=0$ for $i>0$.
        \item If $P$ has a top $\hat{1}$, and $\FF(\hat{1})=0$, then $H_*(P;\FF)\cong H_*(P\setminus\{\hat{1}\};\FF)$.
    \end{enumerate}
\end{lem}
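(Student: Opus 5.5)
The plan is to work directly with the chain complex $T_*(P;\FF)$ of Proposition \ref{chaincomplex}, treating the four parts in turn. We use the indexing convention of $T_n$: a chain $\sigma=(x_n<\cdots<x_0)$ carries the stalk at its \emph{top} element $x_0$, and the differential restricts along $\FF^{x_0}_{x_1}$ only when the top element is deleted.

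For (1), if $P$ is graded of rank $r$, every chain has length at most $r$, so $T_n(P;\FF)=0$ for $n>r$. Hence $H_i=0$ outside $0\le i\le r$.

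For (3), we use the derived-functor description: the colimit of a contravariant functor on $P$ is a colimit over $P^{op}$. If $P$ has a bottom $\hat0$, then $\hat0$ is terminal in $P^{op}$, so $\colim^P\FF\cong\FF(\hat0)$ naturally in $\FF$. The functor $\colim^P$ is therefore evaluation at $\hat0$, which is exact, so all higher left derived functors vanish.

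For (2), the bottom case is a special case of (3). For $P$ with a top $\hat1$, the chain complex of $\underline{M}$ is the simplicial chain complex of $\Delta P$ tensored with $M$, because all structure maps are identities. This order complex is a cone with apex $\hat1$, hence contractible, which gives $H_0=M$ and $H_i=0$ for $i>0$.

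The main step is (4). Let $Q=P\setminus\{\hat1\}$, so that $T_*(Q;\FF)\subseteq T_*(P;\FF)$ is the subcomplex spanned by chains avoiding $\hat1$. The plan is to show the quotient complex is acyclic. In the quotient, chains containing $\hat1$ have $\hat1$ as their top element $x_0$, so their coefficient lies in $\FF(\hat1)=0$. Therefore the only chains that contribute do not contain $\hat1$, and they already lie in $T_*(Q;\FF)$. Concretely, every summand of $T_n(P;\FF)$ indexed by a chain containing $\hat1$ is zero. This gives $T_*(P;\FF)=T_*(Q;\FF)$ as modules.

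The differentials also agree: deleting vertices of a chain in $Q$ stays in $Q$, and the restriction maps are the same. So the complexes coincide and the homologies are isomorphic. The only care needed is the orientation convention (stalk at the maximal element), which is exactly what makes this direct argument work; under the opposite convention one would instead need a cone/contracting-homotopy argument.
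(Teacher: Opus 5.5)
Your proposal is correct. The paper gives no proof of this lemma; it is quoted from Everitt--Turner (Lemma 1), so there is no in-paper argument to compare against.

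Your route is the natural one, and each step matches the paper's conventions for $T_*(P;\FF)$: the stalk sits at the top element $x_0$, and only $d_0$ applies a restriction map.
\begin{enumerate}
\item For (1), you bound chain lengths in $T_*(P;\FF)$.
\item For (3), you note that $\colim^P\FF\cong\FF(\hat0)$ because $\hat0$ is terminal in $P^{op}$. Evaluation at $\hat0$ is exact, so the higher derived functors vanish.
\item For (2), you identify $T_*(P;\underline{M})$ with $C_*(\Delta P)\otimes M$ and use that $\Delta P$ is a cone on $\hat1$.
\item For (4), you observe that any chain through $\hat1$ has $\hat1$ as its top, so its coefficient lies in $\FF(\hat1)=0$. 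Hence $T_*(P;\FF)=T_*(P\setminus\{\hat1\};\FF)$ on the nose.
\end{enumerate}

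One correction to your closing aside, which plays no role in the argument. For a contravariant $\FF$ the stalk must sit at the top of the chain. Deleting the bottom $x_n$ would require a map $\FF(x_n)\to\FF(x_{n-1})$, which a contravariant functor does not provide.

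Under the genuinely opposite convention (a covariant functor, stalk at the bottom), $\hat1$ would be terminal in $P$. The homology would then vanish identically whenever $\FF(\hat1)=0$. Statement (4) would therefore be false in general, not provable by a contracting-homotopy argument.
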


Let $P$ be a poset with a bottom $\hat{0}$ and $\FF$ a sheaf on $P$. We define an augmentation map 
\[
\epsilon: T_0(P\setminus\{\hat{0}\};\FF)\to\FF(\hat{0})
\]
as the sum of structure maps $\FF^x_{\hat{0}}$ over $x\in P\setminus\{\hat{0}\}$. Then we obtain the augmented complex $\widetilde{T}_*(P\setminus\{\hat{0}\};\FF)$, whose homology $\widetilde{H}_*(P\setminus\{\hat{0}\};\FF)$ is called the reduced homology of $P\setminus\{\hat{0}\}$ with coefficients in the sheaf $\FF$. The map $\epsilon$ induces $\epsilon_*: H_0(P\setminus\{\hat{0}\};\FF)\to \FF(\hat{0})$, which coincides with the map $\colim^{P\setminus\{\hat{0}\}}\FF\to\FF(\hat{0})$ induced by $\FF^x_{\hat{0}}$ using the universality of colimit. We have
\[
\widetilde{H}_i(P\setminus\{\hat{0}\};\FF)\cong \begin{cases}
    H_i(P\setminus\{\hat{0}\};\FF), & i>0\\
    \ker\epsilon_*, & i=0\\
    \coker\epsilon_*, & i=-1
\end{cases}
\]

\subsection{OS algebra and OS sheaf}

Let $L$ be a geometric lattice. Fix a total order of the atom set $A=L_1=\{a_1,\ldots,a_n\}$ and introduce a set of symbols $E=\{e_1,\ldots,e_n\}$ in one-to-one correspondence to $A$. 
Let $R$ be a ring and $\wedge^\bullet_RE$ be the exterior algebra over $R$ generated by degree $1$ generators $\{e_1,\ldots,e_n\}$. For an index set $I=\{i_1,\ldots,i_k\}\subseteq [n]=\{1,2,\ldots,n\}$ with $i_1<\cdots<i_k$, write $A_I=\{a_{i_1},\ldots,a_{i_k}\}$, $E_I=\{e_{i_1},\ldots,e_{i_k}\}$ and $e_I=e_{i_1}\cdots e_{i_k}\in \wedge^k_RE$.
The degree $k$ piece $\wedge_R^kE$ is a free $R$-module with basis $\{e_I\mid I\subseteq [n],|I|=k\}$. There is a boundary operator $\partial: \wedge^k_RE\to \wedge^{k-1}_RE$ defined by
\[
\partial(e_{i_1}\cdots e_{i_k})=\sum_{j=1}^k(-1)^{j-1}e_{i_1}\cdots \widehat{e_{i_j}}\cdots e_{i_k},
\]
where $\widehat{e_{i_j}}$ means deleting $e_{i_j}$. 
We say $A_I(I\subseteq [n])$ is a {\it circuit} if it is minimally dependent.
\begin{defn}
Let $L$ be a geometric lattice with atom set $A$. Define the Orlik--Solomon (OS) algebra of $L$ by
\[
\OS^\bullet_R(L):=\wedge^\bullet_RE/\langle \partial e_I\mid A_I \text{ is a circuit}\rangle.
\]
Note that the ideal in the quotient is graded and thus $\OS^\bullet_R(L)$ is graded.
\end{defn}

Orlik--Solomon \cite{Orlik1980} proved that when $L=L(\A)$ is the intersection lattice of a complex hyperplane arrangement $\A$, the OS algebra is isomorphic to the cohomology ring of the complement $M(\A)$. In particular, the Hilbert series of the OS algebra is equal to the Poincar\'e polynomial of the complement.

Let $L$ be a general geometric lattice. For a circuit $A_I=\{a_{i_1},\ldots,a_{i_k}\}(i_1<\cdots<i_k)$ of $L$, its broken circuit is $A_{I\setminus\{\min I\}}=\{a_{i_2},\ldots,a_{i_k}\}$. A subset $A_I$ is said to be NBC, or no-broken-circuit, if it contains no broken circuit. Let us denote by $\NBC(L)$ the set of NBC subsets of $A$, and $\NBC^p(L)=\{A_I\in\NBC(L)\mid |I|=p\}$.

\begin{prop}[\cite{Orlik1992} Chapter 3]\label{OSproperties}
Let $L$ be a geometric lattice.
\begin{enumerate}
    \item The graded piece $\OS^p_R(L)$ is a free $R$-module with basis $\{\overline{e_I}\mid A_I\in \NBC^p(L)\}$.
    \item The Hilbert series of $\OS^\bullet_R(L)$ is equal to the Poincar\'e polynomial of $L$: $\Hilb(\OS^\bullet_R(L),t)=\pi(L,t)$.
    \item The natural map $\bigoplus_{x\in L_p}\OS^p_R(L_{\leq x})\to\OS^p_R(L)$ is an isomorphism of $R$-modules.
\end{enumerate}

\end{prop}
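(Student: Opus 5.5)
The statement collects standard facts, and I would prove them in the order (3)-type splitting, then spanning in (1), then freeness in (1), and finally (2). Throughout, write $\E=\wedge^\bullet_RE$ and let $\mathcal I$ be the Orlik--Solomon ideal.

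\emph{Step 1: the lattice grading.} For a circuit $A_C$ and any $c\in C$ we have $e_C=\pm e_c\,\partial e_C$. Hence $e_S\in\mathcal I$ for every dependent $A_S$, so $\mathcal I$ is generated by the elements $\partial e_S$ with $A_S$ dependent. For such $S$, every $S\setminus\{s\}$ that is independent spans the same flat $\vee A_S$.

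Now grade $\E$ by $L$: place $e_I$ in the component indexed by $X=\vee A_I$ when $A_I$ is independent. Dependent monomials lie in $\mathcal I$, so they do not affect the quotient. With this convention, the generators of $\mathcal I$ become $L$-homogeneous modulo dependent monomials, and so does each product $e_T\,\partial e_S$. Therefore
\[
\OS^p_R(L)=\bigoplus_{x\in L_p}\E_x/\mathcal I_x .
\]
All atoms appearing in a homogeneous relation of degree $x$ lie below $x$, so each such relation is already a relation of $L_{\leq x}$. This identifies $\E_x/\mathcal I_x$ with the top-degree piece $\OS^p_R(L_{\leq x})$, which proves (3).

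\emph{Step 2: NBC sets span.} Suppose $A_I$ contains a broken circuit $A_{C\setminus\{\min C\}}$. The relation $\partial e_C=0$, multiplied by the remaining generators, rewrites $e_I$ as a combination of monomials in which one element of $C\setminus\{\min C\}$ is replaced by $\min C$. Such monomials are lexicographically smaller. Induction on the lexicographic order therefore shows that the classes $\overline{e_I}$ with $A_I\in\NBC^p(L)$ span $\OS^p_R(L)$.

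\emph{Step 3: NBC sets are linearly independent.} This is the main obstacle, since it must hold over an arbitrary ring $R$, where counting dimensions is not available. I would use deletion--restriction with respect to the last atom $a_n$. Let $L'$ be the lattice of the deletion, and let $L''=L_{\geq a_n}$ be the restriction, with its atoms ordered by the minimal atom of each parallel class.

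First I would check the combinatorial bijection. An NBC set of $L$ either avoids $a_n$, in which case it is an NBC set of $L'$, or it contains $a_n$. In the latter case, removing $a_n$ and passing to joins with $a_n$ gives an NBC set of $L''$, and this correspondence is bijective.

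Algebraically, I would build a sequence
\[
0\to\OS^p(L')\xrightarrow{\ \iota\ }\OS^p(L)\xrightarrow{\ \rho\ }\OS^{p-1}(L'')\to0 .
\]
Here $\iota$ is induced by inclusion of generators. The map $\rho$ sends $e_S e_n\mapsto e_{\overline S}$ and kills monomials not involving $e_n$. I would check that $\rho$ is well defined by verifying that it sends the generators $\partial e_C$ into the ideal of $L''$. By induction on the number of atoms, the NBC classes of $L'$ and of $L''$ are bases. Since $\rho$ and $\iota$ are compatible with the NBC bijection, a relation among the NBC classes of $L$ first maps under $\rho$ to a relation in $L''$. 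That relation is trivial, so the original relation involves only sets avoiding $a_n$. It is then a relation in $\OS(L')$ pushed forward by $\iota$.

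The delicate point is injectivity of $\iota$ on the span of the NBC classes of $L'$. I would handle it with the $L$-grading of Step 1, reducing to a single flat and comparing with the same statement for $L_{\leq x}$ by induction on rank. This proves (1).

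\emph{Step 4: the Hilbert series.} Given (1), part (2) reduces to Whitney's theorem in the form
\[
(-1)^{\rank x}\mu(\hat0,x)=\#\{\text{NBC bases of }x\}.
\]
This follows from the same deletion--restriction bijection together with the recursion $\mu_L(\hat0,x)=\mu_{L'}(\hat0,x)-\mu_{L''}(a_n,x)$, or alternatively from Theorem~\ref{Hall} and a shelling of the NBC complex. Summing over $x$ and using (3) then gives $\Hilb(\OS^\bullet_R(L),t)=\pi(L,t)$.
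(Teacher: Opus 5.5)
\textbf{Verdict: genuine gap in Step 3 (injectivity of $\iota$ in top degree).}

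The paper does not prove this proposition; it only cites Orlik--Terao, Chapter~3. Your outline follows the standard route: the $L$-grading for the Brieskorn decomposition, straightening for spanning, deletion--restriction for independence, and Whitney's theorem for the Hilbert series. Steps 1, 2 and 4 are fine as sketches.

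The gap is in Step 3, at exactly the point you flag as delicate: injectivity of $\iota$. Your reduction writes $\iota$ as a direct sum, over flats $x'$ of $L'$, of maps from the $x'$-graded piece $\OS_{x'}(L')$ to $\OS_x(L)$, where $x$ is the closure of $x'$ in $L$. By (3), each such map is the top-degree deletion map for $L_{\le x}$.

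\begin{itemize}
\item When $x<\hat 1$, the lattice $L_{\le x}$ has fewer atoms, so induction applies.
\item If $a_n$ is not a coloop, the summand with $x=\hat 1$ is $\OS^\ell(L')\to\OS^\ell(L)$ itself. Here ``the same statement for $L_{\le\hat 1}=L$'' is precisely what you are trying to prove, so the argument is circular.
\end{itemize}

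All the content sits in this top-degree case. There is no retraction killing $e_n$: for a circuit $A_C$ with $n\in C$, one gets $\partial e_C\mapsto\pm e_{C\setminus\{n\}}\neq 0$. Over $\Z$ you also cannot conclude by counting ranks, because spanning only bounds $\OS^\ell(L)$ from above.

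The missing ingredient is the acyclicity of $\partial$ on the OS algebra.
\begin{itemize}
\item Since $\partial$ is a graded derivation with $\partial^2=0$, it maps the ideal into itself and descends to $\OS^\bullet$. Moreover $\partial(e_1u)=u-e_1\partial u$.
\item Because $\OS^{\ell+1}(L')=0$, it follows that $\partial\colon\OS^\ell(L')\to\OS^{\ell-1}(L')$ is injective.
\item We have $\iota\partial=\partial\iota$, and $\iota$ is injective in degree $\ell-1$. There every flat has rank $\ell-1<\ell$, so your flat-by-flat reduction does apply.
\item Hence $\iota$ is injective in degree $\ell$.
\end{itemize}

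The same trick even gives (1) without deletion--restriction. By (3) and induction on rank, it suffices to treat $\OS^\ell(L)$. The composite $R^{\NBC^\ell(L)}\to\OS^\ell(L)\xrightarrow{\partial}\OS^{\ell-1}(L)\cong R^{\NBC^{\ell-1}(L)}$ is the top boundary map of the NBC complex. That map is injective because the NBC complex is a cone with apex $a_1$.

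A smaller point concerns $\rho$. It is only $R$-linear, so checking it on the generators $\partial e_C$ is not enough; you need $\rho(u\,\partial e_C)\in\mathcal I''$ for all $u$. To see this, write $u=u_0+u_1e_n$ with $u_0,u_1$ free of $e_n$. Let $\lambda\colon e_i\mapsto e_{a_i\vee a_n}$ be the algebra map. Then use that $\lambda$ sends the following elements into $\mathcal I''$:
\begin{itemize}
\item $\partial e_C$, when $n\notin C$;
\item $e_{C\setminus\{n\}}$ and $\partial e_{C\setminus\{n\}}$, when $n\in C$.
\end{itemize}
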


Our goal is to define a sheaf on $L$ using OS algebras. Let $x\in L$ be a flat, then $L_{\leq x}$ is itself a geometric lattice with atom set $A_x:=A(L_{\leq x})=A(L)\cap L_{\leq x}$. Let us assign the OS algebra $\OS_R^\bullet(L_{\leq x})$ to $x$. We denote by $E_x\subseteq E$ the set of generators of $\OS_R^\bullet(L_{\leq x})$. When $x\leq y$ in $L$, we have $L_{\leq x}\subseteq L_{\leq y}$, $A_x\subseteq A_y$ and $E_x\subseteq E_y$. The inclusion $E_x\hookrightarrow E_y$ induces an inclusion of exterior algebras $\wedge^\bullet_RE_x\hookrightarrow\wedge^\bullet_RE_y$ preserving dependency, hence induces an inclusion of OS algebras $\iota_{x,y}:\OS_R^
\bullet(L_{\leq x})\hookrightarrow \OS_R^
\bullet(L_{\leq y})$. Conversely, the map $\pi_{y,x}:E_y\to E_x$
    \[
    \pi_{y,x}(e_i)=\begin{cases}
        e_i, & e_i\in E_x,\\
        0, & e_i\in E_y\setminus E_x,
    \end{cases}
    \]
induces a well-defined surjective graded algebra homomorphism
\[
    \pi_{y,x}:\OS_R^\bullet(L_{\leq y})\to \OS_R^\bullet(L_{\leq x}).
    \]
In fact, let $A_I$ be a circuit in $L_{\leq y}$. If $A_I\subseteq A_x$, then $\partial e_I\in \wedge^\bullet_RE_y$ is mapped to the same element in $\wedge^\bullet_RE_x$. If $A_I\not\subseteq A_x$, then $|A_I\setminus A_x|\geq 2$ since if $A_I\setminus A_x=\{a\}$, the condition $A_I$ is a circuit would force $a\in A_x$. Therefore every monomial occurring in $\partial e_I$ contains at least one generator $e_i\in E_y\setminus E_x$ and $\partial e_I$ is mapped to $0$. 

\begin{defn}
    Let $L$ be a geometric lattice. We define a sheaf $\sOS_R^{\bullet}$ of graded $R$-algebras on $L$ by
    \[
    \sOS_R^{\bullet}(x):=\OS_R^\bullet(L_{\leq x}), ~x\in L,
    \]
    and the structure map
    \[
    (\sOS_R^\bullet)^y_x=\pi_{y,x}: \sOS_R^\bullet(y)=\OS_R^\bullet(L_{\leq y})\to\OS_R^\bullet(L_{\leq x})=\sOS_R^\bullet(x), ~x,y\in L, x\leq y.
    \]
    We call $\sOS_R^\bullet$ the Orlik--Solomon sheaf on $L$. The degree $p$ part is the sheaf of $R$-modules
    \[
    \sOS_R^p(x)=\OS_R^p(L_{\leq x}),
    \]
    with structure maps restricted to degree $p$ part.
\end{defn}

\section{Results}
Let $L$ be a geometric lattice of rank $\ell\geq 2$ and $P:=L\setminus\{\hat{0}\}$. The main result of this paper is the computation of the sheaf homology of $P$ with coefficients in the OS sheaf. For ease of notation, we shall fix $R=\Z$ and omit the reference to $R$ in the subscript.

\subsection{Statement of the result}

We first have some immediate observations. 
If $p=0$, $\sOS^0$ is the constant sheaf $\underline{\Z}$, then $\widetilde{H}_*(P;\sOS^0)=0$ by Lemma \ref{basiclemma} (2). 
If $p>0$, $\sOS^p(\hat{0})=0$, the augmentation map $\epsilon: S_0(P;\sOS^p)\to \sOS^p(\hat{0})$ is trivial and hence $H_*(P;\sOS^p)=\widetilde{H}_*(P;\sOS^p)$. 
Therefore we shall compute $H_i(P;\sOS^p)$ for $p>0$.

\begin{thm}\label{main}
    Let $p>0$. Then
    \[
    H_i(P;\sOS^p)\cong \begin{cases}
        \bigoplus_{z\in L_p}\OS^p(L_{\leq z})\otimes_{\Z}\widetilde{H}_{\ell-2}(\Delta Q_z;\Z),~&i=\ell-1,\\
        0,& i\neq \ell-1,
    \end{cases}
    \]
    where $Q_z=\{x\in P\mid z\not\leq x\}$. Furthermore $\OS^p(L_{\leq z})$ is free abelian of rank $(-1)^{\rank z}\mu(\hat{0},z)$ and $\widetilde{H}_{\ell-2}(\Delta Q_z;\Z)$ is free abelian of rank $b_z=(-1)^{\ell-2}\sum_{x\in L_{\geq z}}\mu(\hat{0},x)$. Therefore $H_{\ell-1}(P;\sOS^p)$ is free abelian of rank $\sum_{z\in L_p}(-1)^{\rank z}\mu(\hat{0},z)b_z$.
\end{thm}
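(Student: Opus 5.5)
The plan is to split the sheaf $\sOS^p$ into a direct sum, over $z\in L_p$, of constant sheaves extended by zero from the principal filters $P_{\geq z}$. Each summand's homology then becomes a relative homology of order complexes. Since $P$ is a cone, that relative homology is the reduced homology of $\Delta Q_z$, and this is concentrated in degree $\ell-2$ by shellability of geometric semilattices.

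\emph{Step 1: splitting the sheaf.} Set $M_z:=\OS^p(L_{\leq z})$ for $z\in L_p$. For each $x\in P$, Proposition \ref{OSproperties}(3) applied to $L_{\leq x}$ gives
\[
\sOS^p(x)=\OS^p(L_{\leq x})\cong\bigoplus_{z\in L_p,\,z\leq x}M_z .
\]
Here the summand $M_z$ is the image of $\OS^p(L_{\leq z})$ under $\iota_{z,x}$, that is, the span of the classes $\overline{e_I}$ with $\vee A_I=z$. I then check that this splitting is compatible with the structure maps $\pi_{y,x}$ for $x\leq y$.

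If $z\leq x$, then $E_z\subseteq E_x$, so $\pi_{y,x}$ restricts to the identity on the copy of $M_z$ (it is $\iota_{z,x}\circ\iota_{z,y}^{-1}$ there). If $z\not\leq x$, every $e_I$ with $\vee A_I=z$ contains some atom not lying below $x$, because otherwise $z=\vee A_I\leq x$. Hence $\pi_{y,x}$ kills that summand.

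Therefore $\sOS^p\cong\bigoplus_{z\in L_p}\FF_z$, where $\FF_z(x)=M_z$ for $x\geq z$, $\FF_z(x)=0$ otherwise, and the nonzero structure maps are identities. Since sheaf homology commutes with finite direct sums, it suffices to compute $H_*(P;\FF_z)$.

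\emph{Step 2: relative homology.} In $T_*(P;\FF_z)$ the summand indexed by $\sigma=(x_n<\cdots<x_0)$ is $M_z$ exactly when $x_0\notin Q_z$, and $0$ otherwise. Chains with top element in the ideal $Q_z$ lie entirely in $Q_z$. The face $d_0$ either keeps the top outside $Q_z$, where the structure map is the identity, or moves it into $Q_z$, where the term becomes $0$. So $T_*(P;\FF_z)$ is exactly the relative simplicial chain complex $C_*(\Delta P,\Delta Q_z)\otimes M_z$.

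Since $\ell\geq 2$, $P$ has top $\hat1$, so $\Delta P$ is a cone and is contractible. The long exact sequence of the pair then gives $H_i(P;\FF_z)\cong\widetilde H_{i-1}(\Delta Q_z;M_z)$. As $M_z$ is free (Proposition \ref{OSproperties}(1)), this is $\widetilde H_{i-1}(\Delta Q_z;\Z)\otimes M_z$ once we know $\widetilde H_*(\Delta Q_z;\Z)$ is free.

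\emph{Step 3: the semilattice factor, which I expect to be the main obstacle.} By Proposition \ref{filtercomplement}, $S=L\setminus L_{\geq z}$ is a geometric semilattice with bottom $\hat0$, and $Q_z=S\setminus\{\hat0\}$. Every maximal element of $S$ has rank $\ell-1$, so $\Delta Q_z$ is pure of dimension $\ell-2$. I would invoke the theorem of Wachs--Walker that the order complex of the proper part of a geometric semilattice is shellable. This makes $\Delta Q_z$ homotopy equivalent to a wedge of $(\ell-2)$-spheres. Its reduced homology is therefore free and concentrated in degree $\ell-2$, which forces $i=\ell-1$. The point to check carefully is that the cited shellability result applies to $S$ with only $\hat0$ removed.

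\emph{Step 4: ranks.} The top-degree coefficient of $\pi(L_{\leq z},t)$ is $\mu(\hat0,z)(-1)^{\rank z}$, so Proposition \ref{OSproperties}(2) gives $\rank M_z=(-1)^{\rank z}\mu(\hat0,z)$.

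For $b_z$, Theorem \ref{Hall} applied to $S$ with a top adjoined gives $\widetilde\chi(\Delta Q_z)=-\sum_{x\in S}\mu(\hat0,x)$. Since $\sum_{x\in L}\mu(\hat0,x)=0$, this equals $\sum_{x\in L_{\geq z}}\mu(\hat0,x)$. Because the reduced homology is concentrated in degree $\ell-2$, the rank is $b_z=(-1)^{\ell-2}\sum_{x\geq z}\mu(\hat0,x)$.

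Summing the ranks $\rank M_z\cdot b_z$ over $z\in L_p$ then gives the total rank stated in the theorem.
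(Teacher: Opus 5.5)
Your proposal is correct and follows the paper's proof essentially step for step. The steps are the same: the Brieskorn splitting of $\sOS^p$ into the summands $B_z\otimes_{\Z}\E_z$; the identification $T_*(P;\E_z)\cong C_*(\Delta P,\Delta Q_z)$, using that $\Delta P$ is a cone; Wachs--Walker shellability of the geometric semilattice $L\setminus L_{\geq z}$; and Philip Hall's theorem for $b_z$. Your worry in Step 3 is unfounded, because CL-shellability of $L\setminus L_{\geq z}$ refers to the poset with a top adjoined, whose proper part is exactly $Q_z$.

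The one step you assert without proof is that every maximal element of $L\setminus L_{\geq z}$ has rank $\ell-1$. This is genuinely needed, since shellability alone would allow spheres of several dimensions. The paper proves it as a separate lemma: take an atom $a\leq z$ with $a\not\leq x$, and choose $y\geq x$ maximal with $a\not\leq y$. If $\rank y<\ell-1$, then $y$ has a cover $y'\neq y\vee a$, which still avoids $a$ and contradicts maximality.
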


Define the graded OS sheaf Euler characteristic of $L$ by
\[
\chi^\OS(L,t):=\sum_{p}\chi^p(L)t^p,~\chi^p(L):=\sum_i(-1)^i\rank_{\Z}H_i(P;\sOS^p).
\]
For $x\in L$, recall the Poincar\'e polynomial of $L_{\leq x}$
\[
\pi_{L_{\leq x}}(t)=\sum_{z\in L_{\leq x}}\mu(\hat{0},z)(-t)^{\rank z}.
\]
\begin{cor}
    We have
    \[
    \chi^\OS(L,t)=1-\sum_{x\in L}\mu(\hat{0},x)\pi_{L_{\leq x}}(t).
    \]
\end{cor}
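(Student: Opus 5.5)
The plan is to compute $\chi^p(L)$ directly from the chain complex of Proposition~\ref{chaincomplex}, without using the explicit form of Theorem~\ref{main}. Each $T_n(P;\sOS^p)$ is a finitely generated free abelian group, because $P$ is finite and each $\OS^p(L_{\leq x})$ is free by Proposition~\ref{OSproperties}(1). The complex is bounded. Hence the alternating sum of the ranks of the homology groups equals the alternating sum of the ranks of the chain groups:
\[
\chi^p(L)=\sum_{n\geq 0}(-1)^n\sum_{\sigma=(x_n<\cdots<x_0)\in\Delta P}\rank_{\Z}\OS^p(L_{\leq x_0}).
\]

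Next I group the chains according to their top element $x_0=x\in P$. A chain with top $x$ is the same thing as $x$ together with a (possibly empty) chain $\tau$ in the open interval $(\hat0,x)$. The length of the full chain is $n=\dim\tau+1$, with the convention $\dim\emptyset=-1$. The sum of $(-1)^{\dim\tau}$ over all such $\tau$ is the reduced Euler characteristic $\widetilde\chi(\Delta(\hat0,x))$. By Philip Hall's theorem (Theorem~\ref{Hall}), applied to the interval $[\hat0,x]$, this equals $\mu(\hat0,x)$; for an atom $x$ only the empty chain occurs, and the identity still holds. Since each $\tau$ contributes with sign $(-1)^n=-(-1)^{\dim\tau}$, the signed count of chains with top $x$ is $-\mu(\hat0,x)$. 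Therefore
\[
\chi^p(L)=-\sum_{x\in P}\mu(\hat0,x)\,\rank_{\Z}\OS^p(L_{\leq x}).
\]
By Proposition~\ref{OSproperties}(2), applied to the geometric lattice $L_{\leq x}$, the rank of $\OS^p(L_{\leq x})$ is the coefficient of $t^p$ in $\pi_{L_{\leq x}}(t)$.

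Finally, I match coefficients degree by degree, taking care of the term $x=\hat0$ and of the constant term.
\begin{itemize}
\item For $p>0$: the term $x=\hat0$ may be added to the sum for free, since $\pi_{L_{\leq\hat0}}(t)=1$ has no $t^p$ coefficient. The added constant $1$ in the claimed formula also has no $t^p$ coefficient. So the $t^p$ coefficients agree.
\item For $p=0$: $\sOS^0$ is the constant sheaf and $P$ has the top $\hat1$, so Lemma~\ref{basiclemma}(2) gives $\chi^0(L)=1$. On the right-hand side, the constant term is $1-\sum_{x\in L}\mu(\hat0,x)$. This equals $1$ because $L$ has rank $\ell\geq 2>0$, so the defining recursion of $\mu$ gives $\sum_{x\in L}\mu(\hat0,x)=0$.
\end{itemize}

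The only step needing care is the sign bookkeeping in the Hall's theorem application, namely the shift between the length $n$ of the full chain and $\dim\tau$, together with the empty-chain convention for atoms. Everything else is formal.
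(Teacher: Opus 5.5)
Your proof is correct, and it takes a genuinely different route from the paper.

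The paper derives the corollary from Theorem~\ref{main}. Concentration in degree $\ell-1$, together with the rank formula involving $b_z=(-1)^{\ell-2}\sum_{x\in L_{\geq z}}\mu(\hat0,x)$, gives $\chi^p(L)=-\sum_{z\in L_p}(-1)^p\mu(\hat0,z)\sum_{x\geq z}\mu(\hat0,x)$ for $p>0$. Exchanging the order of summation over pairs $z\leq x$ then reassembles the polynomials $\pi_{L_{\leq x}}(t)$, and $\sum_{x\in L}\mu(\hat0,x)=0$ absorbs the $x=\hat0$ term.

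You never look at the homology itself. You pass to the alternating sum of ranks of the chain groups, group chains by their top element, and apply Hall's theorem to each interval $[\hat0,x]$. This gives $\chi^p(L)=-\sum_{x\in P}\mu(\hat0,x)\rank\OS^p(L_{\leq x})$. Once $\rank\OS^p(L_{\leq x})$ is read off as the $t^p$-coefficient of $\pi_{L_{\leq x}}(t)$, this is exactly the paper's expression after its exchange of summation. Your sign bookkeeping is right, including the empty chain for atoms and the $p=0$ case.

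What your route buys is independence and generality. The same computation shows that the alternating sum of the ranks of $H_i(P;\FF)$ equals $-\sum_{x\in P}\mu(\hat0,x)\rank\FF(x)$ for any sheaf $\FF$ of finitely generated free abelian groups on $P$. So the corollary depends only on the stalk ranks. It does not need the splitting of $\sOS^p$, the purity lemma, shellability, or the computation of $b_z$. Combined with the vanishing part of Theorem~\ref{main}, it even gives an independent derivation of the total rank of $H_{\ell-1}(P;\sOS^p)$.

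The paper's route, by contrast, is immediate once Theorem~\ref{main} is available, and it is what justifies calling the statement a corollary.
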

\begin{proof}
    By Theorem \ref{main},
    \[
    \chi^p(L)=\begin{cases}
        -\sum_{z\in L_p}(-1)^p\mu(\hat 0,z)\sum_{x\in L_{\geq z}}\mu(\hat0,x), & p>0;\\
        1, & p=0.
    \end{cases}
    \]
    Then
    \begin{align*}
        \chi^\OS(L,t)&=1-\sum_{p\geq 1}\left(\sum_{z\in L_p}(-1)^p\mu(\hat 0,z)\sum_{x\in L_{\geq z}}\mu(\hat0,x)\right)t^p\\
        &=1-\sum_{x\in L\setminus\{\hat0\}}\mu(\hat0,x)\left(\sum_{z\in (\hat0,x]}\mu(\hat0 ,z)(-t)^{\rank z}\right)\\
        &=1-\sum_{x\in L\setminus\{\hat0\}}\mu(\hat0,x)(\pi_{L_{\leq x}}(t)-1)\\
        &=1-\sum_{x\in L\setminus\{\hat0\}}\mu(\hat0,x)\pi_{L_{\leq x}}(t)+\sum_{x\in L\setminus\{\hat0\}}\mu(\hat0,x)
    \end{align*}
    Since $\sum_{x\in L}\mu(\hat0,x)=0,\mu(\hat 0,\hat0)=1$ and $\pi_{\hat0}(t)=1$, the desired formula follows.
\end{proof}

\subsection{Proof of the main theorem}
Let $z\in L_p$, define its top local OS group
\[
B_z:=\OS^p(L_{\leq z}).
\]
It is free abelian of rank $\rank_{\Z}B_z=(-1)^{p}\mu(\hat{0},z)$ (Proposition \ref{OSproperties}(1)(2)). With the total order of atoms, $B_z$ has the usual NBC basis indexed by the $p$-element NBC sets whose join is $z$. The Brieskorn decomposition (Proposition \ref{OSproperties}(3)) gives, for every $x\in L$,
\[
\sOS^p(x)\cong \bigoplus_{z\in L_p\cap L_{\leq x}}B_z.
\]
Under this decomposition, $\pi_{y,x}:\sOS^p(y)\to\sOS^p(x)$ is the projection onto those summands with $z\leq x$. This suggests a decomposition of the degree $p$ OS sheaf
\[
\sOS^p\cong \bigoplus_{z\in L_p} B_z\otimes_{\Z}\E_z,
\]
where $\E_z$ is the indicator sheaf of $L_{\geq z}$,
\[
\E_z(x)=\begin{cases}
    \Z, & z\leq x,\\
    0, & z\not\leq x,
\end{cases}
\]
with identity maps within $L_{\geq z}$ and zero maps otherwise. Then we have for $p>0$,
\[
H_i(P;\sOS^p)\cong \bigoplus_{z\in L_p} H_i(P;B_z\otimes_{\Z}\E_z)\cong \bigoplus_{z\in L_p}B_z\otimes_{\Z}H_i(P;\E_z).
\]
The computation is reduced to $H_i(P;\E_z)$. We use Proposition \ref{chaincomplex}.
\begin{lem}
    There is a natural chain isomorphism
    \[
    T_*(P;\E_z)\cong C_*(\Delta P,\Delta Q_z;\Z).
    \]
    Consequently,
    \[
    H_i(P;\E_z)\cong \widetilde{H}_{i-1}(\Delta Q_z;\Z).
    \]
\end{lem}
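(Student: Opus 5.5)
The plan is to write out the complex $T_*(P;\E_z)$ explicitly from Proposition \ref{chaincomplex} and compare it term by term with the simplicial relative chain complex. A chain $\sigma=(x_n<\cdots<x_0)$ of $P$ contributes $\E_z(x_0)$ to $T_n(P;\E_z)$. This is $\Z$ exactly when $z\leq x_0$, i.e. when the top element $x_0$ lies in $L_{\geq z}$, and $0$ otherwise. The complement $Q_z=P\setminus L_{\geq z}$ is an ideal of $P$, so a chain of $P$ lies in $\Delta Q_z$ if and only if its top element $x_0$ does not lie above $z$. Hence
\[
T_n(P;\E_z)=\bigoplus_{\sigma\in \Delta P\setminus\Delta Q_z,\ \ell(\sigma)=n}\Z\cong C_n(\Delta P;\Z)/C_n(\Delta Q_z;\Z)=C_n(\Delta P,\Delta Q_z;\Z),
\]
with the basis element $1_\sigma$ sent to the class of the oriented simplex $[x_n,\ldots,x_0]$.

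Next I would check that this identification intertwines the differentials. In $T_*$, $d(1_\sigma)=(\E_z)^{x_0}_{x_1}(1)_{d_0\sigma}+\sum_{i\geq1}(-1)^i 1_{d_i\sigma}$.

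For $i\geq1$, the face $d_i\sigma$ keeps the top $x_0$, so it again lies outside $\Delta Q_z$, matching the simplicial face term.

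For $i=0$ there are two cases:
\begin{itemize}
\item If $z\leq x_1$, the structure map is the identity and $d_0\sigma$ has top $x_1\in L_{\geq z}$; this matches the simplicial term.
\item If $z\not\leq x_1$, the structure map is zero. In the relative complex, the face $d_0\sigma$ lies in $\Delta Q_z$ and is therefore zero there.
\end{itemize}

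The simplicial boundary uses the same signs $(-1)^i$ with the vertices indexed $x_n,\ldots,x_0$; one only has to fix the orientation convention consistently. So the map is a chain isomorphism. It is natural because it is defined simplex by simplex.

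Finally, for the consequence I would use the long exact sequence of the pair $(\Delta P,\Delta Q_z)$. Since $P=L\setminus\{\hat0\}$ has the top $\hat1$, the complex $\Delta P$ is a cone, hence contractible, and $\widetilde H_*(\Delta P)=0$. The reduced long exact sequence then gives $H_i(\Delta P,\Delta Q_z)\cong \widetilde H_{i-1}(\Delta Q_z)$. This also holds for $i=0$, where both sides are zero because $Q_z$ is nonempty when $p\geq 1$ (the atoms not below $z$ exist unless $z=\hat1$).

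The edge case $Q_z=\emptyset$ needs a convention. There $\widetilde H_{-1}(\emptyset)=\Z$ and $H_0(\Delta P,\emptyset)=\Z$, so the formula is consistent. The only real care needed is the sign and orientation bookkeeping in the differential check; that is the main, albeit minor, obstacle.
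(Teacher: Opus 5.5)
Your proposal is correct and follows essentially the paper's argument. You identify the chain groups using that $Q_z$ is an ideal, so a chain lies outside $\Delta Q_z$ exactly when its top lies above $z$; you match the $d_0$ term through the two cases of the structure map; and you conclude from the contractibility of the cone $\Delta P$ (apex $\hat1$) and the long exact sequence of the pair.

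Two small tidy-ups:
\begin{itemize}
\item Orienting simplices as $[x_0,x_1,\ldots,x_n]$ (top vertex first) makes the signs agree exactly. With $[x_n,\ldots,x_0]$ they differ by $(-1)^n$ in degree $n$, which would need a rescaling.
\item $Q_z$ is never empty. Since $\ell\geq 2$, $L$ has at least two atoms, and at most one of them lies above $z$. In particular $Q_{\hat1}=P\setminus\{\hat1\}$ contains all atoms, so your edge case $Q_z=\emptyset$ never occurs (your parenthetical justification did not cover $z=\hat1$).
\end{itemize}
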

\begin{proof}
    For a chain $\sigma=(x_n<\cdots< x_0)$, the contribution to the complex $T_*$ is $\E_z(x_0)$, which is nontrivial ($\Z$) precisely when $x_0\in P_{\geq z}$. Since $P_{\geq z}$ is a principal filter, this is equivalent to the chain having at least one vertex in $P_{\geq z}$, or equivalently to $\sigma$ being a simplex of $\Delta P$ not lying in $\Delta Q_z$, where $Q_z=P\setminus P_{\geq z}$. Hence the chain groups are isomorphic. We also need to verify the boundary operators agree. If deleting the top vertex $x_0$ leaves a new top $x_1$ in $P_{\geq z}$, the sheaf map is the identity. If the new top $x_1$ is in $Q_z$, the sheaf map is zero, exactly as in the relative quotient. Since $P$ has maximum $\hat{1}$, $\Delta P$ is contractible. The long exact sequence then gives the result.
\end{proof}

To compute the homology of $\Delta Q_z$, we consider the poset $I_z:=Q_z\cup\{\hat{0}\}=L\setminus L_{\geq z}$. By Proposition \ref{filtercomplement}, $I_z$ is a geometric semilattice, and hence is CL-shellable by Corollary 7.3 of \cite{Wachs1986}. Moreover, we have the following purity result.
\begin{lem}
    $I_z$ is pure of rank $\ell-1$.
\end{lem}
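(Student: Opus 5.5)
We have to show that every maximal element of $I_z=L\setminus L_{\geq z}$ has rank $\ell-1$. Here $z\in L_p$ with $p>0$, so $z\neq\hat0$. Since $I_z$ is a meet semilattice with bottom $\hat0$ and every interval is a geometric lattice (by Proposition \ref{filtercomplement} together with the characterization of \cite{Wachs1986} Theorem 2.1), it is graded by the rank function of $L$. It therefore suffices to show that every $x\in I_z$ with $\rank x<\ell-1$ is covered in $L$ by some element still in $I_z$. Equivalently, no element of rank $\le \ell-2$ is maximal in $I_z$, and no element of rank $\ell$ lies in $I_z$. The second point is immediate: the only rank-$\ell$ element is $\hat1\geq z$.

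Fix $x\in I_z$ with $r=\rank x\le \ell-2$, so $z\not\leq x$. I want an atom $a\not\leq x$ with $z\not\leq x\vee a$; then $x\vee a$ covers $x$ by semimodularity and lies in $I_z$. Suppose for contradiction that every atom $a\not\leq x$ satisfies $z\leq x\vee a$. Since $x\vee a$ covers $x$ and $z\not\leq x$, we get $x\vee a=x\vee z$ for all such $a$. (Indeed $x<x\vee z\leq x\vee a$, and $x\vee a$ covers $x$.) Now $L$ is atomic and $\hat1$ is the join of all atoms. The atoms below $x$ have join $\le x$, and all remaining atoms lie below $x\vee z=x\vee a$, which has rank $r+1$. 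Hence $\hat1\leq x\vee z$, so $\ell=\rank \hat1\le r+1\le \ell-1$, a contradiction. So such an atom $a$ exists, and every maximal element of $I_z$ has rank $\ell-1$.

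The only delicate step is the identification $x\vee a=x\vee z$ for every atom $a\not\leq x$, which uses the covering relation $x\lessdot x\vee a$ from semimodularity of the geometric lattice $L$. The atomicity argument then finishes the proof without needing the semilattice axioms beyond gradedness. I do not expect any real obstacle: the case $x=\hat0$ is covered by the same argument (when $\ell\ge2$), and $\hat0\in I_z$ because $z\ne\hat0$.
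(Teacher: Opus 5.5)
Your proof is correct and is essentially the paper's argument: both come down to the fact that an element of corank at least $2$ in $L$ has at least two upper covers, and at most one of them can be ``bad''. The difference is only bookkeeping. The paper fixes an atom $a\le z$ with $a\not\le x$ and takes a maximal $y\ge x$ avoiding $a$, using atomicity of $L_{\ge y}$. You instead avoid $z$ itself, one cover at a time, and get the two-covers fact from atomicity of $L$ via $x\vee a=x\vee z$. Also, gradedness of $I_z$ needs no appeal to Wachs--Walker, since $I_z$ is a down-set of $L$ containing $\hat0$.
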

\begin{proof}
    Take any $x\in I_z$. Since $z\not\leq x$ and $L$ is atomic, there is an atom $a\in A\cap L_{\leq z}$ such that $a\not\leq x$. We claim that there is a $y\in L_{\ell-1}$ satisfying
    \[
    x\leq y,~a\not\leq y.
    \]
    Hence $z\not\leq y$ because $a\leq z$. Therefore $y\in I_z$. This proves the purity: every $x\in I_z$ lies below a $y\in I_z$ of rank $\ell-1$.

    Next we have to prove the existence of such $y$. Choose $y$ maximal among elements satisfying
    \[
    x\leq y, ~a\not\leq y.
    \]
    Such $y$ exists because $x$ itself satisfies the condition. Suppose $\rank y<\ell-1$. Then the lattice $L_{\geq y}$ has rank at least $2$. Since $L$ is geometric, $L_{\geq y}$ is also geometric, hence atomic. Therefore $L_{\geq y}$ has at least two atoms, i.e. at least two covers of $y$. Because $a\not\leq y$, the join $y\vee a$ covers $y$ by semimodularity. Now choose another cover $y^\prime$ of $y$, $y^\prime\neq y\vee a$, then we have $a\not\leq y^\prime$. Indeed, if $a\leq y^\prime$, then $y\vee a\leq y^\prime$. But both $y\vee a$ and $y^\prime$ cover $y$, so this would force $y^\prime=y\vee a$, contradiction. Hence $y^\prime$ also satisfies
    \[
    x\leq y^\prime,~a\not\leq y^\prime,
    \]
    but $y<y^\prime$, contradicting the maximality of $y$. So $\rank y=\ell-1$.
\end{proof}

Therefore the order complex of $Q_z=I_z\setminus\{\hat{0}\}$ has the homotopy type of a wedge of $(\ell-2)$-spheres. Write the top betti number
\[
b_z:=\rank_{\Z}\widetilde{H}_{\ell-2}(\Delta Q_z;\Z).
\]

\begin{lem}
    We have a formula for the top betti number of $\Delta Q_z$:
    \[
    b_z=(-1)^{\ell-2}\sum_{x\in L_{\geq z}}\mu(\hat{0},x).
    \]
\end{lem}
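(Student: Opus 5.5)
Since $\Delta Q_z$ has the homotopy type of a wedge of $(\ell-2)$-spheres (the preceding lemma plus CL-shellability of $I_z$), its reduced homology is concentrated in degree $\ell-2$ and is free. So $b_z=(-1)^{\ell-2}\widetilde{\chi}(\Delta Q_z)$, and the plan is to compute this reduced Euler characteristic with Philip Hall's theorem (Theorem \ref{Hall}). Adjoin a top element $\hat1'$ to $I_z$ to form the bounded poset $\widehat{I_z}=I_z\cup\{\hat1'\}$. Its proper part is $Q_z$, so Theorem \ref{Hall} gives
\[
\widetilde{\chi}(\Delta Q_z)=\mu_{\widehat{I_z}}(\hat0,\hat1').
\]

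Next we compute this M\"obius value from the recursion. Since $I_z=L\setminus L_{\geq z}$ is an ideal of $L$, every interval $[\hat0,x]$ with $x\in I_z$ is the same in $I_z$ and in $L$. Hence $\mu_{\widehat{I_z}}(\hat0,x)=\mu_L(\hat0,x)$ for $x\in I_z$, and
\[
\mu_{\widehat{I_z}}(\hat0,\hat1')=-\sum_{x\in I_z}\mu(\hat0,x)=-\sum_{x\in L}\mu(\hat0,x)+\sum_{x\in L_{\geq z}}\mu(\hat0,x).
\]
The first sum vanishes because $L$ has rank $\ell\geq 2$, so $\hat0\neq\hat1$ and $\sum_{x\in[\hat0,\hat1]}\mu(\hat0,x)=0$. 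Therefore $\widetilde{\chi}(\Delta Q_z)=\sum_{x\in L_{\geq z}}\mu(\hat0,x)$. Multiplying by $(-1)^{\ell-2}$ gives the stated formula for $b_z$.

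The only point needing care is that $Q_z$ is nonempty, so that the wedge-of-spheres description and the use of reduced homology are legitimate. This holds because $z\neq\hat0$ (as $p>0$) and $\ell\geq2$, so some atom not below $z$ exists, or, if $z$ is an atom, some other atom does. Also, Theorem \ref{Hall} is applied to the bounded poset $\widehat{I_z}$ rather than to a lattice, which is allowed because it only requires a $\hat0$ and a $\hat1$. Beyond this bookkeeping I expect no real obstacle: the homological input is the concentration in top degree, which the preceding lemmas already supply.
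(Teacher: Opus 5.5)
Your proof is correct and is essentially the paper's argument: you adjoin a top to $I_z$, apply Hall's theorem, use that $\mu(\hat0,\cdot)$ is unchanged on the ideal $I_z$, cancel via $\sum_{x\in L}\mu(\hat0,x)=0$, and convert using the wedge-of-$(\ell-2)$-spheres homotopy type. There is one small slip in your nonemptiness remark: the atoms lying in $Q_z$ are those not \emph{above} $z$, i.e.\ all atoms other than $z$, and not those ``not below'' $z$ (for $z=\hat1$ no such atom exists). Nonemptiness still holds, because $\ell\geq 2$ gives at least two atoms.
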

\begin{proof}
    Adjoin a new maximum $\hat{1}_z$ to $I_z$. Then $Q_z=I_z\setminus\{\hat 1_z,\hat0\}$. We have the following,
    \[
    \widetilde{\chi}(\Delta Q_z)=\mu_{I_z\cup\{\hat{1}_z\}}(\hat{0},\hat{1}_z)=-\sum_{x\in I_z}\mu_{I_z\cup\{\hat{1}_z\}}(\hat{0},x)=-\sum_{x\in I_z}\mu_L(\hat{0},x)=\sum_{x\in L_{\geq z}}\mu_L(\hat{0},x),
    \]
    where the first equality is Theorem \ref{Hall}, the second is from the definition of M\"obius functions, the third is by the fact that M\"obius function $\mu_L(\hat0,\bullet)$ is invariant under restriction to ideals and the last is from $\sum_{x\in L}\mu(\hat0 ,x)=0$. Then the conclusion follows from the fact that $\Delta Q_z$ is a wedge of $(\ell-2)$-spheres.
\end{proof}

The proof of Theorem \ref{main} is completed.

\subsection{Basis construction}

Let us construct a basis for the free abelian group
\[
H_{\ell-1}(P;\sOS^p)\cong\bigoplus_{z\in L_p}\OS^p(L_{\leq z})\otimes_{\Z}\widetilde{H}_{\ell-2}(\Delta Q_z;\Z).
\]
Let $z\in L_p$, recall that
\[
\NBC^p(L_{\leq z})=\{A_I \mid |I|=p, A_I\text{ is NBC in }L_{\leq z}\}.
\]
Then by Proposition \ref{OSproperties}(1), $\{\overline{e_I}\mid A_I\in\NBC^p(L_{\leq z})\}$ is a basis for $\OS^p(L_{\leq z})$. 

The construction of a basis for $\widetilde{H}_{\ell-2}(\Delta Q_z;\Z)$ follows from the theory of shellable posets \cite{Bjorner1996}. Wachs--Walker \cite{Wachs1986} proved that geometric semilattices are CL-shellable. Ziegler \cite{Ziegler1992} proved that a geometric semilattice is EL-shellable by exhibiting an edge labeling. The corresponding shelling facets then give basic cycles for top homology. Let us briefly recall Ziegler's construction.

Recall that the atoms of $L$ are ordered $\{a_1,\ldots,a_n\}$ and $I_z=Q_z\cup\{\hat{0}\}=L\setminus L_{\geq z}$ is a pure geometric semilattice of rank $\ell-1$. Let $(\Lambda_z,\gamma_z)$ be the completion of $I_z$ (Theorem \ref{completion}), where $\Lambda_z$ is a geometric lattice and $\gamma_z$ is an atom of $\Lambda_z$ such that
\[
I_z\cong\{u\in\Lambda_z\mid \gamma_z\not\leq u\}.
\]
We order the atoms $A(\Lambda_z)$ by putting $\gamma_z$ first and using the fixed order on the old atoms from $I_z$. Let us identify $\widehat I_z=I_z\cup\{\hat 1_z\}$ with the subposet of $\Lambda_z$ consisting of the elements avoiding $\gamma_z$, together with the top element $\hat 1_z$ of $\Lambda_z$. Ziegler constructed an EL-shelling of $\hat I_z$ (\cite{Ziegler1992} Theorem 2.2). By Bj\"orner \cite{Bjorner1992} Proposition 7.6.4, the homology facets of this shelling of $\Delta Q_z$ correspond to the maximal chains of $\Delta Q_z$ with decreasing labels. These chains can be described using the $\beta$-no-broken-circuit bases of $\Lambda_z$.

Let us recall Ziegler's definition of $\beta$-no-broken-circuit basis. Let $B\subseteq A(\Lambda_z)$ be a basis of $\Lambda_z$, that is an independent set of atoms of cardinality $\ell=\rank \Lambda_z$. If $a\in A(\Lambda_z)\setminus B$, the basic circuit $C(B,a)$ is the unique circuit contained in $B\cup\{a\}$. Then $a$ is {\it externally active} with respect to $B$ if $a=\min C(B,a)$. If $b\in B$, define the basic cocircuit by
\[
C^*(B,b)=\{a\in A(\Lambda_z)\mid (B\setminus\{b\}\cup \{a\})\text{ is a basis of }\Lambda_z\}.
\]
Then $b$ is {\it internally active} if $b=\min C^*(B,b)$. Denote by $\mathrm{EA}(B)$ and $\mathrm{IA}(B)$ the set of externally active atoms and the set of internally active atoms respectively. Then we define
\[
\beta\NBC(\Lambda_z)=\{B:\text{ basis of }\Lambda_z\mid \mathrm{EA}(B)=\varnothing,\mathrm{IA}(B)=\{\gamma_z\}\}.
\]
A basis $B\in\beta\NBC(\Lambda_z)$ is called a $\beta$-no-broken-circuit basis.

\begin{thm}[\cite{Ziegler1992} Theorem 2.4]
    The maximal chains of $\hat I_z$ with decreasing EL labels are exactly the chains
    \[
    c_B: \hat 0 \lessdot b_1 \lessdot b_1\vee b_2 \lessdot \cdots \lessdot b_1\vee\cdots\vee b_{\ell-1}\lessdot \hat 1_z
    \]
    associated to $B=\{b_1,\ldots,b_{\ell-1},\gamma_z\}\in \beta\NBC(\Lambda_z)$ (listed in decreasing order with respect to the atom order of $\Lambda_z$).
\end{thm}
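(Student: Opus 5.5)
We have to prove Ziegler's Theorem 2.4: the maximal chains of $\widehat I_z$ whose EL labels decrease are exactly the chains $c_B$ for $B\in\beta\NBC(\Lambda_z)$. The plan is to make Ziegler's labeling explicit inside $\Lambda_z$ and then read the decreasing-chain condition as the two activity conditions.

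First, the labeling. Take an edge $u\lessdot v$ of $\widehat I_z$. If $v\neq\hat 1_z$, then $v$ avoids $\gamma_z$, and I label the edge by the smallest atom $a$ of $\Lambda_z$ with $a\leq v$ and $a\not\leq u$. Such an $a$ is an atom of $I_z$, and $u\vee a=v$ by semimodularity. For an edge $u\lessdot\hat 1_z$, where $u$ has rank $\ell-1$, I label it by $\min\{a\in A(\Lambda_z)\mid a\not\leq u\}$. This is $\gamma_z$ exactly when $\gamma_z\not\leq u$, which always holds for $u\in I_z$. So every top edge is labeled $\gamma_z$, the smallest label, and this is the standard minimal-atom labeling of the geometric lattice $\Lambda_z$ restricted to $\widehat I_z$. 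With this description I will simply take Ziegler's EL property as given.

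Next, decreasing chains produce $\beta\NBC$ bases. Let $\hat 0\lessdot v_1\lessdot\cdots\lessdot v_{\ell-1}\lessdot\hat 1_z$ be a decreasing maximal chain with labels $b_1>b_2>\cdots>b_{\ell-1}>\gamma_z$. Then $v_i=b_1\vee\cdots\vee b_i$, and $B=\{b_1,\ldots,b_{\ell-1},\gamma_z\}$ is a basis of $\Lambda_z$, because $\gamma_z\not\leq v_{\ell-1}$.

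For $\mathrm{EA}(B)=\varnothing$, suppose some $a\notin B$ equals $\min C(B,a)$. Let $i$ be the largest index with $b_i\in C(B,a)\setminus\{\gamma_z\}$, which forces $a\leq v_i$.
\begin{itemize}
\item If $\gamma_z\notin C(B,a)$ and $a\leq v_i$ but $a\not\leq v_{i-1}$, then $a<b_i$ contradicts $b_i$ being the minimal label of the edge $v_{i-1}\lessdot v_i$.
\item If $\gamma_z\in C(B,a)$, then $a<\gamma_z$ is impossible, since $\gamma_z$ is the smallest atom.
\end{itemize}

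For $\mathrm{IA}(B)=\{\gamma_z\}$, I note first that $\gamma_z$ is internally active because it is globally minimal. For $b_i$, I need a smaller atom $a<b_i$ with $(B\setminus\{b_i\})\cup\{a\}$ a basis. Since the labels decrease, $b_{i+1}\vee\cdots$ is built from smaller atoms. I would use the fact that the label $b_{i+1}$ of $v_i\lessdot v_{i+1}$ is less than $b_i$, which gives $b_{i+1}\not\leq v_{i-1}\vee\ldots$. Then $b_{i+1}$, or for $i=\ell-1$ the atom $\gamma_z$, should lie in $C^*(B,b_i)$ and be smaller than $b_i$. This exchange check is where the main care is needed.

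Conversely, given $B\in\beta\NBC(\Lambda_z)$ listed decreasingly, the chain $c_B$ lies in $\widehat I_z$ because $\gamma_z\notin\{b_1,\ldots,b_{\ell-1}\}$ and $B$ is independent. I then must show that the label of $v_{i-1}\lessdot v_i$ equals $b_i$. If instead some $a<b_i$ satisfied $a\leq v_i$ and $a\not\leq v_{i-1}$, then $C(B,a)\subseteq\{a,b_1,\ldots,b_i\}$ and contains $b_i$. Its minimum would be $a$ unless some $b_j<a$ with $j<i$, which is impossible since $b_j>b_i>a$. So $a$ would be externally active, a contradiction. The labels therefore decrease, ending with $\gamma_z$.

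The main obstacle I expect is the internal-activity direction, namely proving that each $b_i$ is not internally active. If the direct exchange argument fails, I would fall back on counting. Both sets should have cardinality $b_z=|\beta(\Lambda_z)|$: the number of decreasing chains equals the top Betti number by Björner's theorem, and $|\beta\NBC(\Lambda_z)|$ equals Crapo's $\beta$-invariant of $\Lambda_z$, which matches $|\mu|$ of the semilattice by Theorem \ref{completion}. Together with the injective converse direction, equal cardinalities would finish the proof.
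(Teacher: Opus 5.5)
Your proposal has a genuine gap at the very first step: the labeling. You label every top edge $u\lessdot\hat 1_z$ by $\gamma_z$, i.e.\ you restrict the minimal-atom labeling of $\Lambda_z$ to $\widehat I_z$. This is not Ziegler's labeling, and it is not an EL-labeling at all. Since $\gamma_z$ is strictly smaller than every other label, every maximal chain of an interval $[u,\hat 1_z]$ with $\rank u\leq \ell-2$ ends in a descent. In particular $[\hat 0,\hat 1_z]$ has no increasing chain.

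Worse, your converse argument uses only $\mathrm{EA}(B)=\varnothing$. Together with your external-activity argument, it shows that under this labeling the decreasing maximal chains are exactly the $c_B$ for \emph{all} NBC bases $B$ of $\Lambda_z$, each of which contains $\gamma_z$. There are $|\mu_{\Lambda_z}(\hat 0,\hat 1)|$ of these, not $\beta(\Lambda_z)$.

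A concrete counterexample is the paper's rank-two example with $z=a_1$. Here $\widehat I_z=\{\hat 0,a_2,\dots,a_n,\hat 1_z\}$, and all $n-1$ chains $\hat 0\lessdot a_j\lessdot \hat 1_z$ carry labels $(a_j,\gamma_z)$ and are decreasing, whereas $b_z=n-2$. The chain through $a_2$ gives $B=\{a_2,\gamma_z\}$ with $C^*(B,a_2)=\{a_2,\dots,a_n\}$. So $a_2$ is internally active and $B\notin\beta\NBC(\Lambda_z)$.

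So the step you flagged, decreasing $\Rightarrow$ $\mathrm{IA}(B)=\{\gamma_z\}$, is not merely delicate; it is false for your labeling. The exchange you sketch could not work anyway. Both $b_{i+1}$ and $\gamma_z$ already lie in $B\setminus\{b_i\}$, and $C^*(B,b_i)\cap B=\{b_i\}$. Hence a witness that $b_i$ is not internally active must be an atom outside $B$, and a chain ending in the constant label $\gamma_z$ cannot detect such an atom.

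The counting fallback also collapses. Bj\"orner's identification of decreasing chains with homology facets presupposes an EL-labeling, and with your labeling the number of decreasing chains is $|\mu_{\Lambda_z}(\hat0,\hat1)|$, not the Betti number.

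What is missing is Ziegler's actual labeling, in which the label of a top edge $u\lessdot\hat 1_z$ depends on $u$ and thereby encodes internal activity. In rank two, for example, label $a\lessdot\hat 1_z$ by the least atom of $I_z$ other than $a$. Then exactly the chain through the least atom is increasing, and the remaining $n-2$ chains are decreasing, matching $\beta\NBC(\Lambda_z)$. You need to take Ziegler's labeling together with its EL property and redo the internal-activity analysis against it. The paper itself does not reprove this statement; it simply cites Ziegler's Theorem 2.4.
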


This shows that the top homology $\widetilde{H}_{\ell-2}(\Delta Q_z;\Z)$ has a basis indexed by $\beta\NBC(\Lambda_z)$. Ziegler (\cite{Ziegler1992} Corollary 2.7) explicitly constructed a basic cycle $\omega_B$ for $B\in\beta\NBC(\Lambda_z)$ and proved that $\{[\omega_B]\mid B\in\beta\NBC(\Lambda_z)\}$ is the basis of $\widetilde{H}_{\ell-2}(\Delta Q_z;\Z)$ associated to the shelling.

Therefore $H_{\ell-1}(P;\sOS^p)\cong\bigoplus_{z\in L_p}\OS^p(L_{\leq z})\otimes_{\Z}\widetilde{H}_{\ell-2}(\Delta Q_z;\Z)$ has a basis
\[
\{\overline{e_I}\otimes [\omega_B]\mid z\in L_p, A_I\in\NBC^p(L_{\leq z}), B\in\beta\NBC(\Lambda_z)\}.
\]

\section{Examples}
In this section, we take the coefficient $R=\C$ and consider examples whose automorphism group is the symmetric group. We describe the top homology $H_{\ell-1}(P;\sOS^p)$ as a representation of symmetric group $\mathfrak{S}_n$
\[
H_{\ell-1}(P;\sOS^p)\cong \bigoplus_{[z]\in L_p/\mathfrak{S}_n}\mathrm{Ind}^{\mathfrak{S}_n}_{(\mathfrak{S}_n)_z}\left(\OS^p(L_{\leq z})\otimes \widetilde{H}_{\ell-2}(\Delta Q_z) \right),
\]
where $(\mathfrak{S}_n)_z$ is the stabilizer of $z$ and $p>0$. Recall that 
\[
\dim \OS^p(L_{\leq z})=|\mu(\hat 0, z)|,~\dim \widetilde{H}_{\ell-2}(\Delta Q_z)=b_z=(-1)^{\ell-2}\sum_{x\in L_{\geq z}}\mu(\hat 0,x).
\]

In what follows, we denote by $S^{\lambda}$ the Specht module indexed by a partition $\lambda$. We refer to \cite{Stanley2024} for general facts of representation theory of symmetric groups.

\begin{ex}[Boolean lattice]
    Let $L=B_\ell$ be the Boolean lattice of rank $\ell\geq 2$. We identify the atoms with $\{1,2,\ldots,\ell\}$ and a rank $p$ flat with a subset of $\{1,2,\ldots,\ell\}$ of cardinality $p$. The automorphism group $\mathrm{Aut}(B_\ell)$ is isomorphic to the symmetric group $\mathfrak{S}_\ell$. In this case, the dimension of $\widetilde{H}_{\ell-2}(\Delta Q_z)$ is
    \[
    b_z=\begin{cases}
        1, & z=\hat{1};\\
        0, & z\neq\hat 1.
    \end{cases}
    \]
    The only interesting homology is $H_{\ell-1}(P;\sOS^\ell)\cong \OS^\ell(L)\otimes \widetilde{H}_{\ell-2}(\Delta(L\setminus\{\hat 0,\hat 1\}))$ with dimension $|\mu(\hat0,\hat1)|=1$. 
    Note that $\OS^\ell(L)=\C\cdot e_1\cdots e_\ell$ is the sign representation $S^{(1^\ell)}$ of $\mathfrak{S}_\ell$. The order complex of $B_\ell\setminus\{\hat0,\hat1\}$ is the barycentric subdivision of the boundary of an $(\ell-1)$-simplex. A direct analysis or an elaboration using Ziegler's cycle gives the cycle
    \[
    \omega=\sum_{\pi\in\mathfrak{S}_\ell}\mathrm{sgn}(\pi)[\{\pi(1)\}\lessdot\{\pi(1),\pi(2)\}\lessdot\cdots\lessdot\{\pi(1),\ldots,\pi(\ell-1)\}].
    \]
    Therefore $\tau\in\mathfrak{S}_\ell$ acts by $\tau[\omega]=\mathrm{sgn}(\tau)[\omega]$ and then $\widetilde{H}_{\ell-2}(\Delta(L\setminus\{\hat 0,\hat 1\}))\cong S^{(1^\ell)}$.
    
    As representations of $\mathfrak{S}_\ell$, we have
    \[
    H_{\ell-1}(P;\sOS^\ell)\cong S^{(1^\ell)}\otimes S^{(1^\ell)}\cong S^{(\ell)},
    \]
    the trivial representation.
\end{ex}

\begin{ex}[Rank $2$ lattices]
Let $L$ be a rank $2$ lattice with $n\geq 3$ atoms, that is the intersection lattice of a central arrangement of $n$ lines. The dimension formula gives
\[
\dim H_1(P;\sOS^1)=n(n-2),~\dim H_1(P;\sOS^2)=(n-1)^2.
\]
The automorphism group is $\mathfrak{S}_n$. First we compute the $\mathfrak{S}_n$-representation $H_1(P;\sOS^1)$. Choose an atom $z$ say $a_1$. Its stabilizer is $\mathfrak{S}_{n-1}$, permuting other atoms $\{a_2,\ldots,a_n\}$. Thus $\OS^1(L_{\leq a_1})$ is the trivial representation. On the other hand $Q_{a_1}=\{a_2,\ldots,a_n\}$ is a discrete set of $n-1$ atoms. Hence $\widetilde{H}_0(\Delta Q_{a_1})\cong S^{(n-2,1)}$ the standard representation. Therefore
\[
H_1(P;\sOS^1)\cong\mathrm{Ind}^{\mathfrak{S}_n}_{\mathfrak{S}_{n-1}}S^{(n-2,1)}\cong\begin{cases}
    S^{(n-1,1)}\oplus S^{(n-2,2)}\oplus S^{(n-2,1,1)}, & n\geq 4;\\
    S^{(2,1)}\oplus S^{(1,1,1)}, & n=3.
\end{cases}
\]
Next compute $H_1(P;\sOS^2)$. Let $z=\hat1$. Both $\OS^2(L)$ and $\widetilde{H}_0(\Delta Q_{\hat 1})$ are the standard representation $S^{(n-1,1)}$. Therefore
\[
H_1(P;\sOS^2)\cong S^{(n-1,1)}\otimes S^{(n-1,1)}\cong \begin{cases}
    S^{(n)}\oplus S^{(n-1,1)}\oplus S^{(n-2,2)}\oplus S^{(n-2,1,1)}, & n\geq 4;\\
    S^{(3)}\oplus S^{(2,1)}\oplus S^{(1,1,1)}, & n=3.
\end{cases}
\]
\end{ex}

\begin{ex}[Uniform matroid] Let $L=L(U_{r,n})$ be the flat lattice of the uniform matroid $U_{r,n}$ with $1<r\leq n$. So $L$ has rank $r$ and the truncation $L_{\leq r-1}$ is isomorphic to the truncation of Boolean lattice $(B_n)_{\leq r-1}$. The automorphism group is $\mathfrak{S}_n$. Note that this example covers the two previous examples.

Let us identify the atoms as $[n]=\{1,2,\ldots,n\}$. Fix $p<r$ and $q:=n-p$, a rank $p$ flat is a subset of $[n]$ of cardinality $p$. The action of $\mathfrak{S}_n$ on $L_p$ is transitive. Take $S=\{1,\ldots,p\}\in L_p$, then the stabilizer of $S$ is $\mathfrak{S}_p\times\mathfrak{S}_q$. The principal ideal $L_{\leq S}$ is Boolean of rank $p$, so
\[
\OS^p(L_{\leq S})\cong S^{(1^p)}\boxtimes S^{(q)}.
\]
The complex $\Delta Q_S$ is the barycentric subdivision of the simplicial complex
\[
K_S=\{X\subseteq [n]\mid |X|\leq r-1,S\not\subseteq X\}.
\]
Let $K$ be the full $(r-2)$-skeleton of the $(n-1)$-simplex $[n]$. Its top homology is $\widetilde{H}_{r-2}(K)\cong \wedge^{r-1}S^{(n-1,1)}$ as $\mathfrak{S}_n$-modules. Note that $K_S\subseteq K$ and the quotient pair $(K,K_S)$ records exactly the faces of $K$ that do contain $S$. Such a face has the form $S\sqcup Y$, where $Y\subseteq [n]\setminus S$ and $|Y|\leq r-p-1$. If we set $T:=[n]\setminus S$ with $|T|=q$, then the relative chain complex of $(K,K_S)$ is, up to a degree shift and an orientation factor from $S$, the augmented chain complex of the $(r-p-2)$-skeleton of the simplex on $T$. Therefore
\[
H_{r-2}(K,K_S)\cong S^{(1^p)}\boxtimes \bigwedge^{r-p-1}S^{(q-1,1)}
\]
as $\mathfrak{S}_p\times\mathfrak{S}_q$-modules. The homology short exact sequence of the pair $(K,K_S)$ gives a short exact sequence of $\mathfrak{S}_p\times\mathfrak{S}_q$-modules
\[
0\to \widetilde{H}_{r-2}(K_S)\to \mathrm{Res}^{\mathfrak{S}_n}_{\mathfrak{S}_p\times\mathfrak{S}_q}\bigwedge^{r-1}S^{(n-1,1)}\to S^{(1^p)}\boxtimes \bigwedge^{r-p-1}S^{(q-1,1)}\to 0
\]
that splits. Since $\mathrm{Res}^{\mathfrak{S}_n}_{\mathfrak{S}_p\times\mathfrak{S}_q}S^{(n-1,1)}\cong S^{(p-1,1)}\boxtimes S^{(q)}\oplus S^{(p)}\boxtimes S^{(q-1,1)}\oplus S^{(p)}\boxtimes S^{(q)}$, taking exterior powers gives the middle term
\[
\mathrm{Res}^{\mathfrak{S}_n}_{\mathfrak{S}_p\times\mathfrak{S}_q}\bigwedge^{r-1}S^{(n-1,1)}\cong \bigoplus_{a+b=r-1}\bigwedge^a S^{(p-1,1)}\boxtimes\bigwedge^b S^{(q-1,1)}\oplus \bigoplus_{a+b=r-2}\bigwedge^a S^{(p-1,1)}\boxtimes\bigwedge^b S^{(q-1,1)}.
\]
Since $\bigwedge^{p-1}S^{(p-1,1)}\cong S^{(1^p)}$, the quotient term
\[
S^{(1^p)}\boxtimes \bigwedge^{r-p-1}S^{(q-1,1)}\cong \bigwedge^{p-1}S^{(p-1,1)}\boxtimes \bigwedge^{r-p-1}S^{(q-1,1)}.
\]
Comparing the middle term with the quotient term, we obtain
\[
\widetilde{H}_{r-2}(\Delta Q_S)\cong \widetilde{H}_{r-2}(K_S)\cong \bigoplus_{a+b=r-1}\bigwedge^a S^{(p-1,1)}\boxtimes\bigwedge^b S^{(q-1,1)}\oplus \bigoplus_{\substack{a+b=r-2\\(a,b)\neq (p-1,r-p-1)}}\bigwedge^a S^{(p-1,1)}\boxtimes\bigwedge^b S^{(q-1,1)}.
\]
Recall that $\wedge ^k S^{(m-1,1)}=S^{(m-k,1^k)}$ for $1\leq k\leq m-1$ and $S^{(1^m)}\otimes S^{\lambda}\cong S^{\lambda^{\prime}}$ where $\lambda^{\prime}$ is the transpose of $\lambda$. We obtain
\begin{align*}
    H_{r-1}(P;\sOS^p)&\cong \mathrm{Ind}^{\mathfrak{S}_n}_{\mathfrak{S}_p\times\mathfrak{S}_q}\left(\OS^p(L_{\leq S})\otimes \widetilde{H}_{r-2}(\Delta Q_S)\right)\\
    &\cong\mathrm{Ind}^{\mathfrak{S}_n}_{\mathfrak{S}_p\times\mathfrak{S}_q}\left(\bigoplus_{a+b=r-1}S^{(a+1,1^{p-a-1})}\boxtimes S^{(q-b,1^b)}\oplus \bigoplus _{\substack{a+b=r-2\\(a,b)\neq (p-1,r-p-1)}}S^{(a+1,1^{p-a-1})}\boxtimes S^{(q-b,1^b)}\right)
\end{align*}
Using Littlewood--Richardson coefficients $c^\nu_{\lambda,\mu}$, we may express for a partition $\nu\vdash n$, the multiplicity of $S^\nu$ in $H_{r-1}(P;\sOS^p)$ is $\sum_{(a,b)\in I_{p,r,n}}c^\nu_{(a+1,1^{p-a-1}),(q-b,1^b)}$, where the index set
\begin{align*}
    I_{p,r,n}=&\{(a,b)\in \Z_{\geq 0}^2\mid a\leq p-1, b\leq n-p-1, a+b=r-1\}\\
    &\cup\{(a,b)\in\Z_{\geq 0}^2\mid a\leq p-1, b\leq n-p-1, a+b=r-2,(a,b)\neq (p-1,r-p-1)\}.
\end{align*}

Next we consider $p=r$ case. There is only one flat $\hat 1\in L_r$. Then
\[
\OS^r(L)\cong \widetilde{H}_{r-2}(\Delta Q_{\hat 1})\cong \bigwedge^{r-1}S^{(n-1,1)}\cong S^{(n-r+1,1^{r-1})}.
\]
Therefore
\[
H_{r-1}(P;\sOS^r)\cong S^{(n-r+1,1^{r-1})}\otimes S^{(n-r+1,1^{r-1})}\cong \bigoplus_{\nu\vdash n}(S^{\nu})^{\oplus g^{\nu}_{\lambda,\lambda}},
\]
where $\lambda=(n-r+1,1^{r-1})$ and $g^{\nu}_{\lambda,\lambda}$ is the Kronecker coefficient. Since $\lambda$ is of hook shape, a description of $g^{\nu}_{\lambda,\lambda}$ can be found in \cite{Remmel1989,Rosas2001}.
\end{ex}
\renewcommand\refname{References}
\bibliographystyle{hep}
\bibliography{OSsheaf}

\end{document}